\newcolumntype{C}[1]{>{\centering\arraybackslash$}p{#1}<{$}}
\def\<{\langle}
\def\>{\rangle}
\newcommand{\po}{\preccurlyeq}
\newcommand{\dd}{\boldsymbol{\mathrm d}}
\newcommand{\cc}{\boldsymbol{\mathrm c}}
\newcommand{\myref}[2]{\hyperref[#1]{#2~\ref*{#1}}}
\title{The root extraction problem for generic braids}
\author{Mar\'ia Cumplido, Juan Gonz\'alez-Meneses and Marithania Silvero\footnote{Authors partially supported by the Spanish research project MTM2016-76453-C2-1-P and FEDER. First author was also supported by EPSRC New Investigator Award EP/S010963/1. Third author was also supported by the Basque Government grant IT974-16.}}
\date{August, 2019}
\begin{document}

\maketitle

\theoremstyle{plain}
\newtheorem{theorem}{Theorem}

\newaliascnt{lemma}{theorem}
\newtheorem{lemma}[lemma]{Lemma}
\aliascntresetthe{lemma}
\providecommand*{\lemmaautorefname}{Lemma}

\newaliascnt{proposition}{theorem}
\newtheorem{proposition}[proposition]{Proposition}
\aliascntresetthe{proposition}
\providecommand*{\propositionautorefname}{Proposition}

\newaliascnt{corollary}{theorem}
\newtheorem{corollary}[corollary]{Corollary}
\aliascntresetthe{corollary}
\providecommand*{\corollaryautorefname}{Corollary}

\newaliascnt{conjecture}{theorem}
\newtheorem{conjecture}[conjecture]{Conjecture}
\aliascntresetthe{conjecture}
\providecommand*{\conjectureautorefname}{Conjecture}

\theoremstyle{remark}

\newaliascnt{claim}{theorem}
\newaliascnt{remark}{theorem}
\newtheorem{claim}[claim]{Claim}
\newtheorem{remark}[remark]{Remark}
\newaliascnt{notation}{theorem}
\newtheorem{notation}[notation]{Notation}
\aliascntresetthe{notation}
\providecommand*{\notationautorefname}{Notation}

\aliascntresetthe{claim}
\providecommand*{\claimautorefname}{Claim}

\aliascntresetthe{remark}
\providecommand*{\remarkautorefname}{Remark}

\newtheorem*{claim*}{Claim}
\theoremstyle{definition}

\newaliascnt{definition}{theorem}
\newtheorem{definition}[definition]{Definition}
\aliascntresetthe{definition}
\providecommand*{\definitionautorefname}{Definition}

\newaliascnt{example}{theorem}
\newtheorem{example}[example]{Example}
\aliascntresetthe{example}
\providecommand*{\exampleautorefname}{Example}

\def\sectionautorefname{Section}
\def\subsectionautorefname{Subsection}
\def\definitionautorefname{Definition}
\def\algorithmautorefname{Algorithm}


\begin{abstract}
We show that, generically, finding the $k$-th root of a braid is very fast.  More precisely, we provide an algorithm which, given a braid $x$ on $n$ strands and canonical length $l$, and an integer $k>1$, computes a $k$-th root of $x$, if it exists, or guarantees that such a root does not exist. The generic-case complexity of this algorithm is $O(l(l+n)n^3\log n)$. The non-generic cases are treated using a previously known algorithm by Sang-Jin Lee.
\end{abstract}

\section{Introduction}

There are several computational problems in braid groups that have been proposed for their potential applications to cryptography \cite{DehornoySurvey}. Initially, the conjugacy problem in the braid group~$\mathbb B_n$ was proposed as a non-commutative alternative to the discrete logarithm problem \cite{AnshelAnshelGoldfeld, Coreanos}. Later, some other problems were proposed, including the $k$-th root extraction problem: given $x\in \mathbb B_n$ and an integer $k>1$, find $a\in \mathbb B_n$ such that $a^k=x$.

The interest of braid groups for cryptography has decreased considerably, mainly due to the appearance of algorithms which solve the conjugacy problem extremely fast in the generic case \cite{Gebhardt, GebhardtGM-Sliding1, GebhardtGM-Sliding2}. The main problem with the proposed cryptographic protocols turns out to be the key generation. Public and secret keys are chosen `at random', and this implies that the protocols are insecure against algorithms which have a fast generic-case complexity.

While the future of braid-cryptography depends on finding a good key-generation procedure, there are some other problems in braid groups whose generic-case complexity is still to be studied. This is the case of the $k$-th root (extraction) problem.

A priori, the study of the generic case for the $k$-th root problem could be though to be nonsense as, generically, the $k$-th root of a braid~$x$ does not exist. But we should think of the braid~$x$ as the $k$-th power of a generic braid: in protocols based on this problem, a secret braid~$a$ is chosen at random, and the braid $x=a^k$ is made public. Hence we are dealing with braids for which a $k$-th root is known to exist. In any case, the algorithm in this paper not only shows that root extraction in braid groups is generically very fast, but can also be used by those mathematicians needing a simple algorithm for finding a $k$-th root of a braid (or proving that it does not exist), which works in most cases.

There are already known algorithms to solve the $k$-th root problem in braid groups and, more generally, in Garside groups \cite{Sibert, Lee}. But these algorithms can be simplified a lot in the generic case, as we will show in this paper.

The plan of this paper is as follows. In \autoref{preliminaries} we provide the necessary tools to describe the situation and attack the problem. Then in \autoref{section_root}, we prove the theoretical results needed for our proposed algorithm, which is given in \autoref{section_algorithm}, together with the study of its generic-case complexity.

This generic-case complexity turns out to be quadratic on the canonical length~$l$ of the braid, if the number~$n$ of strands is fixed. More precisely, the generic-case complexity is $O(l(l+n)n^3\log n)$ (\autoref{maintheorem}).


\section{Preliminaries}\label{preliminaries}

\subsection{Garside structure of $\mathbb B_n$}
A group $G$ is said to be a \textit{Garside group} \cite{DehornoyLibro} if it admits a submonoid~$\mathcal{P}$ (whose elements are called {\it positive}) such that $\mathcal{P}\cap \mathcal{P}^{-1}=\{1\}$, and a special element $\Delta \in \mathcal{P}$, called {\it Garside element}, satisfying the following properties:

\begin{itemize}
 \item The partial order~$\po$ in~$G$ defined by $a \po b$ if $a^{-1}b \in \mathcal{P}$ is a lattice order. If $a\po b$ we say that~$a$ is a {\it prefix} of~$b$. The lattice structure implies that for all $a,b\in G$ there exists a unique meet $a \wedge b$ and a unique join $a \vee b$ with respect to~$\po$.  Notice that this partial order is invariant under left-multiplication.

\item The set of simple elements $\mathcal{S}\coloneqq\{s\in G\,|\, 1\po s \po \Delta  \}$ is finite and generates~$G$.

\item Conjugation by $\Delta$ preserves $\mathcal{P}$, that is, $\Delta^{-1}\mathcal{P} \Delta = \mathcal{P}$.

\item $\mathcal{P}$ is atomic: the atoms are the indivisible elements of~$\mathcal P$ (elements $a\in \mathcal{P}$ for which there is no decomposition $a=bc$ with non-trivial elements $b,c\in\mathcal{P}$). Then, for every $x\in\mathcal{P}$ there is an upper bound on the number of atoms in a decomposition of the form $x=a_1a_2\cdots a_n$, where each~$a_i$ is an atom.
\end{itemize}

One of the main examples of Garside groups is the braid group on $n$~strands, denoted by~$\mathbb{B}_n$. This group has a standard presentation due to Artin \cite{Artin}:
$$
\mathbb{B}_n = \left< \sigma_1, \sigma_2, ... , \sigma_{n-1} \left|\begin{array}{cccc}
\sigma_i\sigma_j\sigma_i = \sigma_j\sigma_i\sigma_j & & &  \mbox{if }  |i-j| = 1 \\
\sigma_i\sigma_j = \sigma_j\sigma_i & & &  \mbox{if }  |i-j| > 1
\end{array}
\right.
\right>.
$$

Attending to the above presentation, a braid is said to be positive if it can be written as a product of positive powers of the generators~$\{\sigma_i\}_{i=1}^n$. The set of positive braids forms the monoid~$\mathcal{P}$ corresponding to the classical Garside structure of~$\mathbb{B}_n$. We will denote this monoid by~$\mathbb{B}_n^+$.

The usual Garside element in~$\mathbb{B}_n^+$, which we denote~$\Delta_n$, is defined recursively setting $\Delta_2=\sigma_1$ and
$$\Delta_n = \Delta_{n-1} \sigma_{n-1}\sigma_{n-2} \cdots \sigma_1,$$
for all $n>2$. We will often write~$\Delta$ and omit the subindex~$n$ when there is no ambiguity.

Consider now the inner automorphism $\tau: \mathbb{B}_n \to \mathbb{B}_n$ determined by~$\Delta$. That is, $\tau(x) = \Delta^{-1}x\Delta$. One can easily show from the presentation of~$\mathbb B_n$ that $\tau(\sigma_i) = \sigma_{n-i}$ for $1 \leq i \leq n-1$. Hence~$\tau$ has order~2 and~$\Delta^2$ is central. In fact, the center of~$\mathbb B_n$ is cyclic, generated by~$\Delta^2$ \cite{Chow}.

The set~$\mathcal S$ of simple elements and the automorphism~$\tau$ will be very important in the sequel.

\subsection{Normal forms, cyclings and decyclings}

It is well-known that Garside groups have solvable word problem, as one can compute a normal form for each element.

Let us first define the \textit{right complement} of a simple element $s\in \mathcal S$ as $\partial(s) = s^{-1} \Delta$. That is, $\partial(s)$~is the only element $t\in \mathcal P$ such that $st=\Delta$. Let us see that $\partial(s)=t$ is also a simple element. Recall that the simple elements are the positive prefixes of~$\Delta$. Since~$\tau$ preserves~$\mathcal P$ (by definition of Garside group), we have that~$\tau(s)$ is positive. Now
$$
  st\tau(s)=\Delta \tau(s) = s \Delta,
$$
hence $t\tau(s)=\Delta$, which implies that~$t$ is a positive prefix of~$\Delta$, that is, $t\in \mathcal S$. It follows that we have a map $\partial:\: \mathcal S \to \mathcal S$. Notice that, by definition, $\partial^2\equiv \tau$.

Given two simple elements $s, t \in \mathcal S$, we say that the decomposition~$st$ is \textit{left weighted} if~$s$ is the biggest possible simple element (with respect to~$\po$) in any decomposition of the element~$st$ as a product of two simple elements. This condition can be restated as $\partial(s) \wedge t = 1$, i.e., $\partial(s)$ and~$t$ have no non-trivial prefixes in common.

\begin{definition}{\rm \cite{ElrifaiMorton, Epsteinetal}}
The \emph{left normal form} of an element $x \in \mathbb{B}_n$ is the unique decomposition $x = \Delta^p x_1 \cdots x_l$ so that $p \in \mathbb{Z}$, $l \geq 0$, $x_i \in \mathcal S\setminus\{1, \Delta\}$ for $i = 1, \ldots, l$, and $x_i x_{i+1}$ is a left weighted decomposition, for $i = 1, \ldots, l-1$.
\end{definition}

Given such a decomposition, we define the \textit{infimum, supremum} and \textit{canonical length} of~$x$ as $\inf(x) = p$, $\sup(x) = p+l$ and $\ell(x) = l$, respectively. Equivalently, the infimum and supremum of~$x$ can be defined as the maximum and minimum integers~$p$ and~$s$ so that $\Delta^p \po x \po \Delta^s$ (see \cite{ElrifaiMorton}).

It is important to notice that conjugation by~$\Delta$ preserves the Garside structure of~$\mathbb B_n$. Hence, if the left normal form of a braid~$x$ is $\Delta^p x_1\cdots x_l$, then the left normal form of~$\tau(x)$ is $\Delta^p \tau(x_1)\cdots \tau(x_l)$. We will make use of this property later.

%
%

Garside groups also have solvable conjugacy problem. One of the main tools to solve problems related to conjugacy in braid groups are the summit sets, which are subsets of the conjugacy class of a braid. Throughout this article we are going to use two of them: the {\it super summit set} \cite{ElrifaiMorton} and the {\it ultra summit set} \cite{Gebhardt}. Let us first introduce some concepts:

\begin{definition}
Let $x = \Delta^p x_1 \cdots x_l$ be in left normal form, with $l>0$. Notice that we can write:
$$
 x= \tau^{-p}(x_1) \Delta^p x_2\cdots x_l.
$$
We define the {\it initial factor} of $x$ as $\iota(x) =\tau^{-p}(x_1)$, and the {\it final factor} of $x$ as  $\varphi(x) = x_l$. We can then write:
$$
 x = \iota(x)\Delta^p x_2 \cdots x_l  \, \quad \mbox{ and } \quad \, x =  \Delta^p x_1 \cdots x_{l-1}\varphi(x).
$$
If $l=0$, we set $\iota(x) = 1$ and $\varphi(x) = \Delta$.
\end{definition}

Notice that, as~$\tau^2$ is the identity, we actually have either $\iota(x)=x_1$ if~$p$ is even, or $\iota(x)=\tau(x_1)$ if~$p$ is odd. This happens in braid groups, but not in other Garside groups in which the order of~$\tau$ is bigger.

\begin{definition} {\rm{\cite{ElrifaiMorton}}}
Let $x = \Delta^p x_1 \cdots x_l$ be in left normal form, with $l>0$. The \textit{cycling} and \textit{decycling} of~$x$ are the conjugates of~$x$ defined, respectively, as
$$
 \cc(x) = \Delta^p x_2 \cdots x_l \iota(x) \, \quad \mbox{ and } \quad \, \dd(x) = \varphi(x) \Delta^p x_1 \cdots x_{l-1}.
$$
\end{definition}

Thus~$\cc(x)$ is the conjugate of~$x$ by~$\iota(x)$, and that~$\dd(x)$ is the conjugate of~$x$ by~$\varphi(x)^{-1}$.

Cyclings and decyclings were defined in \cite{ElrifaiMorton} in order to try to simplify the braid~$x$ by conjugations. Usually, if $l\geq 2$, the decomposition $\Delta^p x_2 \cdots x_l \iota(x)$ is {\bf not} the left normal form of~$\cc(x)$. So~$\cc(x)$ could a priori have a shorter normal form (with less factors). A similar situation happens for~$\dd(x)$.

If $\Delta^p x_2 \cdots x_l \iota(x)$ is actually the left normal form of~$\cc(x)$ (when $l\geq 2$), we say that the braid~$x$ is {\it rigid}. This happens if and only if $x_l\iota(x)$ (that is, $\varphi(x)\iota(x)$) is a left weighted decomposition. We can extend this definition to every case, when $l\geq 0$:

\begin{definition}
We say that $x\in \mathbb B_n$ is {\it rigid} if $\varphi(x)\iota(x)$ is a left weighted decomposition.
\end{definition}

If~$x$ is rigid, neither cycling nor decycling can simplify its normal form $x=\Delta^p x_1 \cdots x_l$. Actually, the normal forms of the iterated cyclings of~$x$ are, if~$p$ is even:
$$
    \cc(x)=\Delta^p x_2\cdots x_l x_1, \qquad \cc^2(x)=\Delta^p x_3\cdots x_lx_1x_2, \qquad \ldots
$$
so $\cc^l(x)=x$ in this case. In the case when~$p$ is odd we have:
$$
    \cc(x)=\Delta^p x_2\cdots x_l \tau(x_1), \qquad \cc^2(x)=\Delta^p x_3\cdots x_l\tau(x_1)\tau(x_2), \qquad \ldots
$$
so $\cc^{2l}(x)=x$ in this case.

In the same way, if~$x$ is rigid we have, for~$p$ even:
$$
    \dd(x)=\Delta^p x_l x_1\cdots x_{l-1}, \qquad \dd^2(x)=\Delta^p x_{l-1} x_{l} x_1\cdots x_{l-2}, \qquad \ldots
$$
so $\dd^l(x)=x$ in this case. If~$p$ is odd we get:
$$
   \dd(x)=\Delta^p \tau(x_l) x_1\cdots x_{l-1}, \qquad \dd^2(x)=\Delta^p \tau(x_{l-1}) \tau(x_{l}) x_1\cdots x_{l-2}, \qquad \ldots
$$
so $\dd^{2l}(x)=x$ in this case. We then see that, if~$x$ is rigid, iterated cyclings and decyclings correspond to cyclic permutations of the factors in the normal form of~$x$ (possibly conjugated by~$\Delta$, if~$p$ is odd); moreover, when applied to rigid braids, $\cc$ and~$\dd$ are inverses of each other.

\subsection{Summit sets}

Let now $x\in \mathbb B_n$ be an arbitrary braid (not necessarily rigid). Consider the conjugacy class of~$x$, denoted~$x^{\mathbb{B}_n}$, and write~${\inf}_s(x)$ (resp.~${\sup}_s(x)$) for the maximal infimum (resp. the minimal supremum) of an element in~$x^{\mathbb{B}_n}$. These numbers are known to exist \cite{ElrifaiMorton}, and are called the {\it summit infimum} and the {\it summit supremum} of~$x$, respectively. Set $\ell_s(x)=\sup_s(x)-\inf_s(x)$, the {\it summit length} of~$x$. It is shown in \cite{ElrifaiMorton} that the elements in~$x^{\mathbb B_n}$ having the shortest possible normal form are those whose canonical length is precisely~$\ell_s(x)$, and they coincide with the elements whose infimum and supremum are equal to~$\inf_s(x)$ and~$\sup_s(x)$, respectively. The set formed by these elements is called the \textit{supper summit set} of the braid~$x$:
$$
  SSS(x) = \left\{y \in x^{\mathbb{B}_n} \, | \, \ell(y) = {\ell}_s(x)\right\}= \left\{y \in x^{\mathbb{B}_n} \, | \, \inf(y) = {\inf}_s(x), \, \, \sup(y) = {\sup}_s(x)\right\}.
$$

Starting from~$x$, it is possible to obtain an element in~$SSS(x)$ by applying cyclings and decyclings iteratively. It is known \cite{ElrifaiMorton} that if $\inf(x)<\inf_s(x)$ then the infimum of~$x$ can be increased by iterated cycling. Actually, in this case $\inf(x)<\inf(\cc^k(x))$ for some $k<\frac{n(n-1)}{2}$ (see \cite{BirmanKoLee2}). Hence, every $\frac{n(n-1)}{2}$ iterations either the infimum has increased, or one is sure to have an element whose infimum is the summit infimum.

In the same way, if $\sup(x)>\sup_s(x)$, then the supremum of~$x$ can be decreased by iterated decycling \cite{ElrifaiMorton}, and in that case $\sup(x)>\sup(\dd^k(x))$ for some $k<\frac{n(n-1)}{2}$ \cite{BirmanKoLee2}. Hence, every $\frac{n(n-1)}{2}$ iterations either the supremum has decreased, or we are sure to have an element whose supremum is the summit supremum. Since decycling can never decrease the infimum of an element, it follows that starting with any $x\in \mathbb B_n$ and applying iterated cycling (until summit infimum is obtained) followed by iterated decycling (until summit supremum is obtained) yields an element $y\in SSS(x)$.

The super summit set~$SSS(x)$ is a finite set, but it is usually huge, so smaller subsets of the conjugacy class of~$x$ were defined in order to solve the conjugacy problem of~$x$ more efficiently. Namely, the \emph{ultra summit set} of~$x$, denoted by~$USS(x)$, is a subset of~$SSS(x)$ defined as follows \cite{Gebhardt}:
$$
   USS(x)=\{y\in SSS(x)\,|\,  \cc^m(y)=y \text{ for some } m>0\}.
$$
Since~$SSS(x)$ is finite, the subset~$USS(x)$ is also finite. It is then clear that one obtains an element is~$USS(x)$ by iterated application of cycling, starting from an element in~$SSS(x)$, when a repeated element is obtained. Actually, the whole orbit under cycling of an element in~$USS(x)$ belongs to~$USS(x)$. So~$USS(x)$ is a finite set of orbits under cycling.

Notice that every rigid braid belongs to its ultra summit set, as cylings and decyclings are basically cyclic permutations of its factors. It is shown in \cite{BirmanGebhardtGM1} that, if~$x$ is conjugate to a rigid braid and $\ell_s(x)>1$, then~$USS(x)$ coincides with the set of rigid conjugates of~$x$.

There is actually a simpler way, in the general case, to obtain an element in~$USS(x)$ starting from~$x$. Instead of using cyclings and decyclings, one can use the following single type of conjugation:

\begin{definition}\rm{\cite{GebhardtGM-Sliding1}}
Given $x\in \mathbb B_n$, the {\it cyclic sliding} of~$x$ is defined as $\mathfrak s(x)=\mathfrak p(x)^{-1} x \: \mathfrak p(x)$, where $\mathfrak p(x)=\iota(x)\wedge \partial(\varphi(x))$.
\end{definition}

\begin{theorem}\rm{\cite{GebhardtGM-Sliding1}}
Given $x\in \mathbb B_n$, there are integers $0\leq k<t$ such that $\mathfrak s^k(x)=\mathfrak s^t(x)$. For every such pair of integers, one has $\mathfrak s^k(x)\in USS(x)$.
\end{theorem}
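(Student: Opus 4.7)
The statement splits into two parts: (i) existence of indices $0\leq k<t$ with $\mathfrak s^k(x)=\mathfrak s^t(x)$, and (ii) every such $\mathfrak s^k(x)$ belongs to $USS(x)$.

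For (i), I would first prove two monotonicity statements: $\inf(\mathfrak s(y))\geq \inf(y)$ and $\sup(\mathfrak s(y))\leq \sup(y)$ for every $y\in\mathbb B_n$. These follow by reduction to the corresponding properties of $\cc$ and $\dd$ recalled above, using that $\mathfrak p(y)=\iota(y)\wedge\partial(\varphi(y))$ is a simple prefix of both the cycling conjugator $\iota(y)$ and the decycling conjugator $\partial(\varphi(y))$. Next, I would adapt the quantitative cycling/decycling bound from \cite{BirmanKoLee2}: whenever $\inf(\mathfrak s^j(x))<\inf_s(x)$ (respectively $\sup(\mathfrak s^j(x))>\sup_s(x)$), a bounded number—of order $n^2$—of further slidings strictly increases the infimum (respectively decreases the supremum). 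Consequently, after finitely many iterations the trajectory enters the finite set $SSS(x)$ and remains there, so the pigeonhole principle supplies the required pair $0\leq k<t$.

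For (ii), set $y:=\mathfrak s^k(x)$ and $N:=t-k$, so $y\in SSS(x)$ and $\mathfrak s^N(y)=y$. I must exhibit $m>0$ with $\cc^m(y)=y$. The key identity, obtained by splitting $\iota(y)=\mathfrak p(y)\,r(y)$ with $r(y):=\mathfrak p(y)^{-1}\iota(y)$, is
\[
\cc(y) \;=\; \iota(y)^{-1}\,y\,\iota(y) \;=\; r(y)^{-1}\,\mathfrak p(y)^{-1}\,y\,\mathfrak p(y)\,r(y) \;=\; r(y)^{-1}\,\mathfrak s(y)\,r(y),
\]
so each cycling step is a sliding step followed by conjugation by the residue $r(y)$. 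Iterating this comparison around one full sliding period of $y$ produces a telescoping expression $\cc^N(y)=W^{-1}\,y\,W$, where $W$ is a product of residues read off along the orbit $y,\mathfrak s(y),\dots,\mathfrak s^{N-1}(y)$. The sliding-periodicity $\mathfrak s^N(y)=y$ is then used to show that $W$ centralises $y$, which forces $\cc^N(y)=y$ and thus $y\in USS(x)$.

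The hard step is (ii): the definition of $USS(x)$ demands genuine cycling-periodicity of $y$ itself, not merely eventual periodicity of its cycling orbit, which would follow cheaply from finiteness of $SSS(x)$ and pigeonhole. Controlling how the left normal form of $y$—in particular its initial and final factors—evolves under a single sliding, so that the telescope above can be set up and the residues tracked coherently along the whole circuit, is the technical heart of the argument. Everything else is routine application of the finite-summit-set machinery of \cite{ElrifaiMorton,Gebhardt}.
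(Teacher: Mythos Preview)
The paper does not prove this theorem; it is stated without proof as a result quoted from \cite{GebhardtGM-Sliding1}, so there is no in-paper argument to compare against. I can only assess the soundness of your outline.

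Part~(i) is fine but over-engineered. Once you know that $\inf(\mathfrak s(y))\geq\inf(y)$ and $\sup(\mathfrak s(y))\leq\sup(y)$, every iterate $\mathfrak s^j(x)$ lies in the finite set $\{z:\inf(x)\leq\inf(z),\ \sup(z)\leq\sup(x)\}$, and pigeonhole gives the repetition immediately; no quantitative bound is needed for mere existence of $k<t$.

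Part~(ii) has two genuine gaps. First, the assertion $y\in SSS(x)$ requires the strict-improvement statement you allude to (``if $\inf(\mathfrak s^j(x))<\inf_s(x)$ then further slidings eventually increase the infimum''). This is not a routine adaptation of the cycling/decycling bounds of \cite{BirmanKoLee2}: cyclic sliding modifies the normal form differently from $\cc$ and $\dd$, and establishing the analogous monotonicity is one of the main technical results of \cite{GebhardtGM-Sliding1} itself. You are essentially assuming part of what is to be proved.

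Second, the telescoping in your display does not iterate. From $\cc(y)=r(y)^{-1}\mathfrak s(y)\,r(y)$ you want $\cc^2(y)$, but $\cc^2(y)=\cc\bigl(r(y)^{-1}\mathfrak s(y)\,r(y)\bigr)$ involves $\iota$ of the conjugate $r(y)^{-1}\mathfrak s(y)\,r(y)$, which is \emph{not} $r(y)^{-1}\iota(\mathfrak s(y))\,r(y)$ in general, since cycling does not commute with arbitrary conjugation. So the residues along the sliding orbit $y,\mathfrak s(y),\dots$ do not compose into a conjugator for $\cc^N(y)$ in the way you describe, and the claim that the resulting $W$ centralises $y$ has no mechanism behind it. The actual argument in \cite{GebhardtGM-Sliding1} uses the \emph{transport} of conjugating elements under cycling, which provides exactly the compatibility between $\cc$ and conjugation that your telescope needs but lacks. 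Your final paragraph correctly flags this as the hard step, but the sketch as written does not carry it out.
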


By the above result, one can obtain an element in~$USS(x)$ by iterated cyclic sliding starting form~$x$. Furthermore, if~$x$ is conjugate to a rigid element (this will be the generic situation, as we will see in \autoref{subsection_generic}), iterated cyclic sliding yields the {\it shortest} positive conjugating element from~$x$ to a rigid element.

\begin{theorem}\label{Cyclic_Sliding_fastest_conjugation}\rm{\cite{GebhardtGM-Sliding1}}
Let $x\in \mathbb B_n$ and suppose that~$x$ is conjugate to a rigid braid. Then there is an integer $k>0$ such that~$\mathfrak s^k(x)$ is rigid. Moreover, the conjugating element~$\alpha$ from~$x$ to~$\mathfrak s^k(x)$, that is,
$$
   \alpha = \mathfrak p(x) \: \mathfrak p(\mathfrak s(x))\: \mathfrak p(\mathfrak s^2(x))\cdots \mathfrak p(\mathfrak s^{k-1}(x))
$$
is the smallest positive element (with respect to~$\po$) conjugating~$x$ to a rigid element, meaning that for every positive element~$\beta$ such that $\beta^{-1}x\beta$ is rigid, one has $\alpha\po \beta$.
\end{theorem}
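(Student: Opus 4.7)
The plan is to prove the two assertions separately: first the existence of $k>0$ such that $\mathfrak{s}^k(x)$ is rigid, and then the minimality of the associated conjugator.

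For existence, I would begin with the key observation that rigid braids coincide with the fixed points of $\mathfrak{s}$. Indeed, if $x$ is rigid then $\varphi(x)\iota(x)$ is left weighted, which by definition means $\iota(x)\wedge \partial(\varphi(x))=1$; hence $\mathfrak{p}(x)=1$ and $\mathfrak{s}(x)=x$. By the theorem just cited, iterated cyclic sliding lands in $USS(x)$ after finitely many steps, so the task is to show that, under the hypothesis that $x$ is conjugate to a rigid braid, iterated sliding within $USS(x)$ actually produces a fixed point. When $\ell_s(x)>1$ this is immediate from the result of Birman--Gebhardt--Gonz\'alez-Meneses cited earlier: $USS(x)$ coincides with the set of rigid conjugates, so every element of $USS(x)$ is itself slide-fixed. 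The degenerate cases $\ell_s(x)\in\{0,1\}$ (in which ``rigidity'' is essentially vacuous) must be checked separately by direct inspection of the sliding orbits.

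For minimality I would argue by induction on $k$. The inductive step reduces the statement to the following \emph{base case}: if $\gamma\in\mathcal{P}$ and $\gamma^{-1}x\gamma$ is rigid, then $\mathfrak{p}(x)\preccurlyeq\gamma$. Granting this, applied to $(x,\beta)$ it gives $\mathfrak{p}(x)\preccurlyeq\beta$, so one may write $\beta=\mathfrak{p}(x)\,\beta'$ with $\beta'\in\mathcal{P}$; then $\beta'$ conjugates $\mathfrak{s}(x)$ to the same rigid braid $\beta^{-1}x\beta$, and the inductive hypothesis applied to $\mathfrak{s}(x)$ yields $\mathfrak{p}(\mathfrak{s}(x))\cdots\mathfrak{p}(\mathfrak{s}^{k-1}(x))\preccurlyeq\beta'$. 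Concatenation then gives $\alpha\preccurlyeq\beta$.

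The real obstacle is therefore the base case, and this is where I expect the bulk of the work. To attack it I would take the normal forms $x=\Delta^p x_1\cdots x_l$ and $\gamma^{-1}x\gamma=\Delta^p y_1\cdots y_l$; the two have the same infimum and supremum because $\gamma$ is positive and the conjugate, being rigid, lies in $SSS(x)$. The rigidity of the conjugate translates into $\partial(y_l)\wedge \tau^{-p}(y_1)=1$, and the standard ``transport'' behaviour of the first and last factors under positive conjugation relates $y_1$ and $y_l$ to $\iota(x)$, $\varphi(x)$ and the initial and terminal pieces of $\gamma$. Combining these two pieces of information with the lattice identity $\mathfrak{p}(x)=\iota(x)\wedge \partial(\varphi(x))$ and the invariance of $\preccurlyeq$ under left multiplication should force $\mathfrak{p}(x)\preccurlyeq \gamma$. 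This is exactly the universality property that distinguishes cyclic sliding from arbitrary cycling and decycling, and I would model the detailed argument on the analogous minimality statement for conjugators into the ultra summit set proved in the Gebhardt--Gonz\'alez-Meneses sliding papers.
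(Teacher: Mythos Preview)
The present paper does not prove this theorem: it is quoted from \cite{GebhardtGM-Sliding1} and used as a black box, so there is no in-paper argument against which to compare your attempt.

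On your sketch itself: the overall architecture is the right one and is indeed how the original source proceeds---existence via the fact that iterated sliding eventually lands in $USS(x)$, which (for $\ell_s(x)>1$) consists entirely of rigid braids under the hypothesis, and minimality by an induction on $k$ that reduces everything to the single base claim $\mathfrak{p}(x)\preccurlyeq\gamma$ for any positive $\gamma$ with $\gamma^{-1}x\gamma$ rigid. The genuine gap is that you do not prove this base claim. You list the ingredients (transport of $\iota$ and $\varphi$ under positive conjugation, the rigidity condition $\partial(y_l)\wedge\tau^{-p}(y_1)=1$, the lattice definition of $\mathfrak{p}(x)$) and then defer the work by saying you would ``model the detailed argument on the analogous minimality statement \ldots\ proved in the Gebhardt--Gonz\'alez-Meneses sliding papers''. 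That is circular: the theorem you are asked to prove \emph{is} that minimality statement (the rigid case is not a separate, easier result---rigid elements are exactly the fixed points of $\mathfrak{s}$, so the set of rigid conjugates is the set of sliding circuits and the two minimality statements coincide). To close the gap you must actually carry out the transport computation, e.g.\ show directly that the first simple factor of any positive conjugator to a rigid braid already absorbs $\mathfrak{p}(x)$.
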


After obtaining one element in~$USS(x)$, it is possible to compute all elements in~$USS(x)$ together with conjugating elements connecting them. In this way, one solves the conjugacy problem in~$\mathbb B_n$, as two elements~$x$ and~$y$ are conjugate if and only if $USS(x)=USS(y)$ or, equivalently, if $USS(x)\cap USS(y)\neq \emptyset$. Then, in order to check whether~$x$ and~$y$ are conjugate, one can compute the whole set~$USS(x)$, and one element $\widetilde y\in USS(y)$. Then, $x$ and~$y$ are conjugate if and only if $\widetilde y\in USS(x)$. By construction, one can even compute a conjugating element from~$x$ to~$y$.

In order to understand the forthcoming proofs in this paper, we will need to describe some conjugating elements connecting the elements of~$USS(x)$.

\begin{definition}\rm{\cite{Gebhardt}}
Let $x \in \mathbb{B}_n$ and $y \in USS(x)$. A simple non-trivial element $s \in \mathcal S$ is said to be a \emph{minimal simple element} for~$y$ if $y^s \in USS(x)$ and $y^t \notin USS(x)$, for every $1 \prec t \prec s$.
\end{definition}

In \cite{Gebhardt}, Gebhardt showed that for any two elements $y,z\in USS(x)$ there exists a sequence
$$
    y=y_1 \stackrel{c_1}{\longrightarrow} y_2 \stackrel{c_2}{\longrightarrow} \cdots \rightarrow y_t \stackrel{c_t}{\longrightarrow} y_{t+1} =z,
$$
where~$c_i$ is a minimal simple element for~$y_i$, and $y_{i+1}=c_i^{-1}y_ic_i$, for $i=1,\ldots,t$. Moreover, he introduced an algorithm to compute all minimal simple elements for a given $y \in USS(x)$. This allows to construct a directed graph~$\Gamma_x$, whose vertices correspond to elements of~$USS(x)$, and whose arrows correspond to minimal simple elements, in such a way that for every minimal simple element~$s$ for~$y$, there is an edge with label~$s$ from~$y$ to $y^s = s^{-1}ys$. By the above discussion, it follows that~$\Gamma_x$ is a connected graph, and this is why~$USS(x)$ can be computed starting with a single vertex, iteratively computing the minimal simple elements corresponding to each known vertex, until all vertices are obtained.

We will later see that, generically, ultra summit sets are really small. Actually, they usually have a very simple structure, that we explain now.

\begin{lemma} \rm{\cite{BirmanGebhardtGM2}}
Let $y \in USS(x)$ with $\ell(y) >0$ and let~$s$ be a minimal simple element for~$y$. Then, $s$~is a prefix of either~$\iota(y)$ or~$\partial(\varphi(y))$, or both.
\end{lemma}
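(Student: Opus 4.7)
The plan is to argue by contradiction. Suppose $s$ is a minimal simple element for $y \in USS(x)$ with $\ell(y) > 0$, but $s \not\po \iota(y)$ and $s \not\po \partial(\varphi(y))$. Set $a = s \wedge \iota(y)$ and $b = s \wedge \partial(\varphi(y))$; by hypothesis both are strict prefixes of $s$. The goal is to exhibit a simple element $t$ with $1 \prec t \prec s$ and $y^t \in USS(x)$, which directly contradicts the minimality of $s$. The natural candidates for $t$ are $a$, $b$, and the lattice join $a \vee b$ (which is again simple, as $\mathcal{S}$ is a sublattice of the Garside lattice).

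Write $y = \Delta^p x_1 \cdots x_l$ in left normal form, so that $\iota(y) = \tau^{-p}(x_1)$ and $\varphi(y) = x_l$. The key computation
\[
y^a \;=\; \Delta^p \bigl(\tau^{-p}(a)^{-1} x_1\bigr) x_2 \cdots x_l \, a
\]
gives the infimum bound for free: $a \po \iota(y)$ implies $\tau^{-p}(a) \po x_1$, hence $\tau^{-p}(a)^{-1} x_1$ is positive and $\inf(y^a) \geq p$. A dual computation at the right end of $y$ yields $\sup(y^b) \leq p+l$ whenever $b \po \partial(\varphi(y))$. Thus $a$ handles the infimum side and $b$ handles the supremum side; this is the heuristic reason why prefixes of $\iota(y)$ and of $\partial(\varphi(y))$ are the natural conjugating simples inside $USS(x)$.

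The hard part is to upgrade each one-sided bound into actual membership in $USS(x)$. For $y^a$ one must additionally verify that $\sup(y^a) \leq p+l$ and that the ultra-summit cyclic-closure property $\cc^m(y^a) = y^a$ holds for some $m > 0$. The supremum bound should come from factoring the conjugation $y \to y^s$ through $y \to y^a \to y^s$, using $\sup(y^s) = p+l$ together with a lattice calculation in the interval $[1,\Delta^{p+l}]$ to transfer the bound back to $y^a$; the cyclic closure should then follow by placing $y^a$ on a conjugation path in the Gebhardt graph $\Gamma_x$ between the $USS$ elements $y$ and $y^s$. A symmetric argument handles $y^b$, and if neither $a$ nor $b$ alone produces a contradiction, the combined candidate $a \vee b$ (a strict prefix of $s$ by assumption) must be analysed by merging both bounds. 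The residual degenerate case $a = b = 1$ is excluded by a normal-form calculation: if $s \wedge \iota(y) = 1$ then $\tau^{-p}(s)$ is lattice-coprime with $x_1$, forcing $\tau^{-p}(s)$ to divide $x_2 \cdots x_l \, s$ in order to preserve $\inf(y^s) = p$, and a dual requirement at the right end then conflicts with $s \wedge \partial(\varphi(y)) = 1$. I expect the supremum-preservation step and the ultra-summit closure to be the main technical obstacles, demanding careful manipulation of the Garside lattice and of the minimal-simple-element graph of Gebhardt.
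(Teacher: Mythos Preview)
The paper does not prove this lemma at all; it is quoted verbatim from \cite{BirmanGebhardtGM2} and used as a black box. So there is no ``paper's own proof'' to compare against, and I can only comment on your proposal on its own merits.

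Your candidates $a=s\wedge\iota(y)$ and $b=s\wedge\partial(\varphi(y))$ are exactly the right objects, and your treatment of the degenerate case $a=b=1$ is essentially correct (modulo a sign on~$\tau$): from $s\wedge\partial(\varphi(y))=1$ the word $x_1\cdots x_l\,s$ is already in left normal form, and then $\inf(y^s)=p$ forces $\tau^{p}(s)\po x_1\cdots x_l\,s$, hence $\tau^{p}(s)\po x_1$ (the first factor of a normal form is the $\Delta$--meet), i.e.\ $s\po\iota(y)$, contradicting $a=1$.

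The genuine gap is in the non-degenerate step. You try to verify $y^a\in USS(x)$ by checking $\inf$, $\sup$ and cyclic closure separately, and you propose to get cyclic closure by ``placing $y^a$ on a conjugation path in the Gebhardt graph $\Gamma_x$''. That is circular: $\Gamma_x$ is \emph{defined} via minimal simple elements, which is precisely what you are trying to analyse. The missing ingredient is Gebhardt's convexity theorem for ultra summit sets: if $y\in USS(x)$ and $y^u,y^v\in USS(x)$ for positive $u,v$, then $y^{u\wedge v}\in USS(x)$. Since $y^{\iota(y)}=\cc(y)\in USS(x)$ and $y^{\partial(\varphi(y))}=\tau(\dd(y))\in USS(x)$, convexity gives $y^a,y^b\in USS(x)$ in one stroke, and minimality of $s$ then forces $a\in\{1,s\}$ and $b\in\{1,s\}$. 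Combined with the degenerate-case argument above, this finishes the proof. Your outline would work once you invoke this lemma instead of attempting the ad hoc verification.
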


The above lemma allows us to classify the arrows in~$\Gamma_x$ into two groups: a directed edge labelled by~$s$ starting at $y \in USS(x)$ is black (resp. grey), if~$s$ is a prefix of~$\iota(x)$ (resp. of~$\partial(\varphi(y))$). In principle, an edge could be of both colors at the same time (a bi-colored arrow, whose label is a prefix of both~$\iota(x)$ and~$\partial(\varphi(x))$), but not in the case of rigid braids, as $\iota(x)\wedge \partial(\varphi(x))=1$ if~$x$ is rigid. Actually, this is a necessary and sufficient condition:

\begin{lemma}\rm{\cite{BirmanGebhardtGM2}}\label{lemma_rigid_iff}
A braid $y \in USS(x)$ with $\ell(y) >0$ is rigid if and only if none of the edges starting at~$y$ is bi-colored.
\end{lemma}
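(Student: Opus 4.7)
The proof is an if-and-only-if, and I expect the forward direction to be immediate while the reverse direction will need a short argument that produces an actual bi-colored minimal simple element from the failure of rigidity. My plan is to recall that rigidity of $y$ is equivalent to $\iota(y)\wedge\partial(\varphi(y))=1$, and that this meet coincides (by the definition quoted in the excerpt) with the preferred prefix $\mathfrak{p}(y)$ used to define cyclic sliding $\mathfrak{s}(y)=\mathfrak{p}(y)^{-1}\,y\,\mathfrak{p}(y)$.

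For the forward direction I would argue the contrapositive. If some edge starting at $y$ is bi-colored, its label $s$ is a non-trivial simple element with $s\po \iota(y)$ and $s\po \partial(\varphi(y))$. Then $s\po \iota(y)\wedge \partial(\varphi(y))$, so this meet is non-trivial, and hence $\varphi(y)\iota(y)$ is not left weighted. Therefore $y$ is not rigid.

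For the reverse direction, suppose $y\in USS(x)$ with $\ell(y)>0$ is not rigid, so $m\coloneqq \iota(y)\wedge \partial(\varphi(y))\neq 1$. Then $m=\mathfrak{p}(y)$, which is a simple element, and the conjugate $y^{m}=\mathfrak{s}(y)$ lies in $USS(x)$ because the ultra summit set is preserved by cyclic sliding (it is in fact a finite union of $\mathfrak{s}$-orbits, by the theorem of Gebhardt--Gonz\'alez-Meneses quoted just above). So $m$ is a non-trivial simple element such that $y^{m}\in USS(x)$. By the standard property of minimal simple elements for ultra summit sets, whenever such an $m$ exists, there is a minimal simple element $s$ for $y$ with $s\po m$; explicitly, among the non-trivial prefixes of $m$ that conjugate $y$ into $USS(x)$, pick one of minimal length, which is then minimal simple by definition. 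Any such $s$ satisfies $s\po \iota(y)$ and $s\po \partial(\varphi(y))$, so it labels a bi-colored edge starting at $y$, contradicting our hypothesis.

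The only step that is not completely mechanical is the production of a minimal simple $s\po m$ from the fact that $y^{m}\in USS(x)$; I would lean on the fact, used implicitly throughout the construction of $\Gamma_x$ in \cite{Gebhardt}, that every element of $USS(x)$ reachable from $y$ by a simple conjugator can be reached via a minimal simple conjugator dividing the original one. Once this is in hand, the two directions close off cleanly and the key identification $\mathfrak{p}(y)=\iota(y)\wedge\partial(\varphi(y))$ is what makes the bi-colored edge appear exactly in the non-rigid case.
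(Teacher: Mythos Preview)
The paper does not actually prove this lemma; it gives the forward direction in one sentence (``not in the case of rigid braids, as $\iota(x)\wedge \partial(\varphi(x))=1$ if $x$ is rigid'') and then cites \cite{BirmanGebhardtGM2} for the full equivalence. Your forward direction reproduces exactly this observation, so there is nothing to compare there.

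For the reverse direction your argument is sound in outline, but the justification you give for the key step is not quite right. You assert that $\mathfrak s(y)\in USS(x)$ because ``the ultra summit set \ldots\ is in fact a finite union of $\mathfrak s$-orbits, by the theorem of Gebhardt--Gonz\'alez-Meneses quoted just above''. The quoted theorem only says that iterated cyclic sliding eventually reaches $USS(x)$; it does \emph{not} say that $USS(x)$ consists of $\mathfrak s$-periodic points. In fact that stronger claim is false in general: the set of $\mathfrak s$-periodic conjugates is the set of sliding circuits $SC(x)$, and there are examples in \cite{GebhardtGM-Sliding1} with $SC(x)\subsetneq USS(x)$. What you actually need is the weaker statement that $USS(x)$ is \emph{closed} under $\mathfrak s$; this is true and is proved in \cite{GebhardtGM-Sliding1} (via the transport map, which shows that conjugating an element of $USS(x)$ by its preferred prefix again yields a $\cc$-periodic super summit element), but it is a separate result, not a consequence of the theorem you cite. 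Once that is in place, your final step---extracting a minimal simple element $s\po m=\mathfrak p(y)$ from the fact that $y^{m}\in USS(x)$---is indeed the standard construction from \cite{Gebhardt}, and $s\po m$ immediately gives $s\po\iota(y)$ and $s\po\partial(\varphi(y))$, so the edge is bi-colored.

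In short: your strategy is correct and essentially the natural one, but tighten the citation for $\mathfrak s(y)\in USS(x)$ and drop the claim that $USS(x)$ is a union of sliding orbits.
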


\begin{definition}\label{def_USSminimal}
Given a braid $x\in \mathbb{B}_n$, its associated~$USS(x)$ is \emph{minimal} if $\ell_s(x)>1$ and, for every vertex~$y$ in the graph~$\Gamma_x$, there are exactly two directed edges starting at~$y$, a black one labeled~$\iota(y)$ and a grey one labeled~$\partial(\varphi(y))$. \end{definition}

Notice that, as a consequence of \autoref{lemma_rigid_iff}, if~$USS(x)$ is minimal then all elements in $USS(x)$ are rigid. Moreover, the arrow labeled~$\iota(y)$ corresponds to a cycling of~$y$, and the arrow labeled~$\partial(\varphi(y))$ corresponds to a {\it twisted decycling} of~$y$, meaning a decycling followed by the automorphism~$\tau$. This implies that, if~$USS(x)$ is minimal, the elements of~$USS(x)$ are obtained from~$y$ by applying~$\cc$ and~$\tau\circ \dd$ in every possible way. Since~$y$ is rigid, cyclings and decyclings basically correspond to cyclic permutations of the factors. Therefore, if~$USS(x)$ is minimal, it consists of either two orbits under cycling (conjugate to each other by~$\Delta$), or one orbit under cycling (conjugate to itself by~$\Delta$). If the infimum of~$y$ is even, the orbit of~$y$ has at most $\ell (y) = \ell_s(x)\leq \ell(x)$ elements, so the size of~$USS(x)$ is at most~$2\ell(x)$. If the infimum of~$y$ is odd, the orbit of~$y$ has at most $2\ell(y)\leq 2\ell(x)$ elements, and it is conjugate to itself by~$\Delta$, so it is the only orbit. Therefore, in any case, if~$USS(x)$ is minimal it has at most~$2\ell(x)$ elements.
\\

\begin{remark}\label{checking_minimal} In order to see whether $USS(x)$ is minimal, one should a priori check the condition in \autoref{def_USSminimal} for every element in $USS(x)$. But it is actually shown in \cite[Theorem~4.6]{GMValladares} that, given $y\in USS(x)$, the set~$USS(x)$ is minimal if and only if $\ell(y)>1$ and the minimal simple elements for~$y$ are precisely~$\iota(y)$ and~$\partial(\varphi(y))$. Hence, one just needs to compute the minimal elements for a single arbitrary element $y \in USS(x)$.
\end{remark}

Let us see that this case, in which $USS(x)$ is so small and has such a simple structure, is generic.

\subsection{Generic braids}\label{subsection_generic}

Since~$\mathbb B_n$ is an infinite set, it is necessary to explain what we mean by `picking a random braid' or by saying that a braid is `generic'. Even if we fix the subset of braids of a given length, we must specify if we choose braids from the subset with a uniform distribution, or if we pick braids by choosing a random walk in the Cayley graph, which are the two usual situations.

We will consider the Cayley graph of the braid group~$\mathbb{B}_n$, taking as generators the simple braids, and assume that each edge of the Cayley graph has length~1, so it becomes a metric space. Let us point out that left normal forms of braids are closely related to geodesics in this Cayley graph \cite{Charney}.

Now let~$B(r)$ denote the ball of radius~$r$ centered at the trivial braid~$1$. As the number of simple braids is finite, the set~$B(r)$ is a finite subset of~$\mathbb B_n$. We will consider the uniform distribution within this set. It turns out that `most' elements in~$B(r)$ have a very simple ultra summit set:

\begin{theorem}\rm{\cite{GMValladares}}
The proportion of braids in~$B(r)$ whose ultra summit set is minimal tends to~$1$ exponentially fast, as~$r$ tends to infinity.	
\end{theorem}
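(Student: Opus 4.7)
The plan is to reduce minimality of $USS(x)$ to a finite list of local conditions on the left normal form of a USS-representative, and then to bound the number of braids in $B(r)$ violating any of these conditions by a quantity exponentially smaller than $|B(r)|$.

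First, using the relation between geodesics in the Cayley graph (with simple-element generators) and left normal forms, a uniformly random braid in $B(r)$ has canonical length $\Theta(r)$ except on an exponentially small subset, so the hypothesis $\ell_s(x) > 1$ in \autoref{def_USSminimal} is almost always satisfied. Next, \autoref{checking_minimal} lets me verify minimality at a single $y \in USS(x)$: namely that $\ell(y) > 1$, that $y$ is rigid (equivalently $\iota(y) \wedge \partial(\varphi(y)) = 1$), and that the minimal simple elements for $y$ are exactly $\iota(y)$ and $\partial(\varphi(y))$. Using Gebhardt's algorithm to compute the minimal simple elements, both of these conditions translate into a finite list of local ``incompatibility'' conditions on the initial and final factors of $y$ together with their behaviour under $\tau$.

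The probabilistic heart of the argument: I would show that, for each local condition, the proportion of length-$r$ sequences of simple factors whose associated braid violates it at \emph{every} position of the cycling orbit of its USS-representative decays like $\rho^r$ for some $\rho < 1$ depending only on $n$. The intuition is that generic factors rarely exhibit the specific combinatorial coincidences needed either to destroy rigidity or to produce a spurious minimal simple element; since the cycling orbit visits linearly many positions, the chance of avoiding the ``good'' pattern at every one of them is exponentially small. Combined with the exponential growth of $|B(r)|$ and a union bound over the finite list of local conditions, this yields the claimed convergence to~$1$.

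The main obstacle is comparing the uniform distribution on $B(r)$ with the induced distribution on the normal form factors of the USS-representative. Normal form factors of a random product are not independent, since they are coupled by left-weightedness, and iterated cyclic sliding reshuffles the data further. I would sidestep this by a transfer-matrix counting argument on pairs of consecutive simple elements: rather than working with probabilities directly, I would bound the number of length-$r$ simple-factor products whose resulting braid is ``bad'' and compare to the known growth rate of $|B(r)|$, avoiding the need to control the induced distribution on USS-representatives explicitly.
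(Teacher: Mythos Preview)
The paper does not prove this theorem; it is quoted verbatim from \cite{GMValladares} and used as a black box. There is therefore no ``paper's own proof'' to compare your plan against.

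For what it is worth, the paper does indicate the provenance of the result: it says the theorem ``was obtained by refining'' the Caruso--Wiest result (\autoref{teoCarusoWiest}), which already gives generic rigidity with a short conjugator. Your plan does not invoke that result but attempts to establish both generic rigidity and the minimal-simple-element condition from scratch via a transfer-matrix count. That is in the right spirit---the cited work does proceed by local conditions on normal-form factors and automaton/growth-rate comparisons---but your sketch leaves the hardest step unaddressed: you need to control not the input braid $x$ but a USS representative $y$ obtained after an unbounded number of cyclic slidings, and ``sidestepping'' this by counting bad factor sequences in $x$ itself does not obviously say anything about the factors of $y$. In the actual argument this is handled by first securing a rigid conjugate close to $x$ (via Caruso--Wiest) and only then imposing the extra local conditions; without that intermediate step your plan has a genuine gap.
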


This is why we can say that the ultra summit set of a `generic braid' is minimal. Moreover, the above result was obtained by refining the following theorem, which gives some important information concerning the elements in~$B(r)$. We have simplified the statement to adapt it to our situation:

\begin{theorem}\rm{\cite{CarusoWiest}}\label{teoCarusoWiest}
The proportion of braids~$x$ in~$B(r)$ which are conjugate to a rigid braid $y=\alpha^{-1} x \alpha$, in such a way that~$\alpha$ is a positive braid with $\ell(\alpha)<\ell(x)$, tends to~$1$ exponentially fast, as~$r$ tends to infinity.
\end{theorem}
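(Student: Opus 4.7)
My plan combines two ingredients: linear growth of the canonical length $\ell(x)$ for typical $x\in B(r)$, and fast termination of iterated cyclic sliding for generic braids. First, I would establish that with probability tending to~$1$ exponentially fast, a braid $x\in B(r)$ satisfies $\ell(x)\geq c_n r$ for a constant $c_n>0$ depending only on~$n$. This is a standard counting argument: the number of braids with $\ell(x)\leq cr$ grows exponentially in~$r$ at a strictly slower rate than the full ball~$B(r)$ (for any $c<1$ sufficiently small), so the proportion of $x\in B(r)$ with abnormally small canonical length is exponentially negligible.

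Next, I invoke \autoref{Cyclic_Sliding_fastest_conjugation}: the minimal positive conjugator from~$x$ to a rigid conjugate, should one exist, is the product $\alpha=\mathfrak{p}(x)\,\mathfrak{p}(\mathfrak{s}(x))\cdots \mathfrak{p}(\mathfrak{s}^{k-1}(x))$ of $k$ simple elements, whence $\ell(\alpha)\leq k$. So it suffices to prove that for generic~$x$, iterated cyclic sliding reaches a rigid conjugate in far fewer than $\ell(x)$ steps. I will in fact aim for the stronger statement that $k\leq K_n$ with exponentially high probability, where $K_n$ depends only on~$n$.

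The technical heart is a mixing argument on the Markov chain of left normal form factors. The left normal form of a random braid is modelled by a subshift of finite type whose transitions encode the left-weighted condition between consecutive simple factors; this shift is finite-state, irreducible, and aperiodic, coming from the biautomatic structure on~$\mathbb{B}_n$. Consequently, for large~$\ell(x)$, the joint law of $\iota(x)$ and $\varphi(x)$ is exponentially close to a product measure, under which the rigidity condition $\iota(x)\wedge\partial(\varphi(x))=1$ (equivalently $\mathfrak{p}(x)=1$) occurs with probability bounded below by some $q_n>0$. Since a single cyclic sliding alters only a bounded-size neighborhood of the endpoints of the normal form, the new pair $(\iota(\mathfrak{s}(x)),\varphi(\mathfrak{s}(x)))$ remains approximately independent of the past. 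The number of slidings to reach rigidity is therefore stochastically dominated by a geometric random variable with success rate~$q_n$, so $k\leq K$ with probability at least $1-(1-q_n)^K$; taking $K=K_n$ large enough yields $\ell(\alpha)\leq K_n\ll c_n r\leq \ell(x)$ with exponentially high probability.

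The main obstacle is making the mixing and independence statements rigorous. One must verify that for~$x$ drawn uniformly from~$B(r)$ (rather than from a long stationary trajectory of the Markov chain), the joint law of the first and last normal form factors really is exponentially close to a product measure that charges rigid-inducing pairs with positive mass, and that the endpoints after each successive sliding remain sufficiently independent of the history to justify the geometric domination. The natural tools are the transfer matrix of the finite-state automaton recognising left normal forms, together with the Perron-Frobenius spectral gap, plus a boundary-term analysis comparing the uniform measure on~$B(r)$ with the stationary measure of the Markov chain.
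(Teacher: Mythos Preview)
The paper does not prove this theorem: it is quoted from Caruso--Wiest as an external input, so there is no ``paper's own proof'' to compare against. Your task is therefore to reproduce (or replace) an argument that lives entirely in the cited reference.

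As for the viability of your sketch, the first ingredient (linear growth of $\ell(x)$ for typical $x\in B(r)$) is fine and standard. The gap is in the second part. Your geometric-domination argument hinges on the assertion that after applying $\mathfrak s$, the pair $(\iota(\mathfrak s(x)),\varphi(\mathfrak s(x)))$ is again ``approximately independent of the past''. But cyclic sliding is deterministic: once you apply $\mathfrak s$ to a uniformly random $x\in B(r)$, the result is no longer uniform in any ball, and there is no reason the endpoint pair should be close to a product measure conditioned on the history of previous slidings. You cannot iterate the Perron--Frobenius mixing bound through the sliding dynamics without a mechanism that decouples the new endpoints from what sliding has already done; no such mechanism is supplied. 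The actual argument in Caruso--Wiest avoids this trap: rather than tracking the distribution under iterated sliding, they show that a generic normal form contains, with exponentially high probability, a short ``blocking'' subword near each end whose presence forces sliding to stabilise after a bounded number of steps (bounded in terms of $n$ only). The randomness is used once, to guarantee the blocking pattern appears; after that the conclusion is deterministic. Your proposal would need a comparable structural lemma to close the gap.
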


Therefore, not only generic braids have minimal ultra summit sets (made of rigid braids), but one can also obtain a rigid conjugate of a generic braid~$x$ very fast, applying iterated cyclic sliding to~$x$. By \autoref{Cyclic_Sliding_fastest_conjugation}, the obtained conjugating element will be the smallest possible positive conjugator, so its canonical length will be smaller than~$\ell(x)$. Once that a rigid conjugate~$y$ (which belongs to~$USS(x)$) is obtained, one can compute the whole~$USS(x)$ very fast, as it consists of at most~$2\ell(x)$ elements, connected by cyclings and twisted decyclings. This is why solving the conjugacy problem in braid groups is generically very fast.

We will also be interested in the centralizer~$Z(x)$ of a braid~$x$. Notice that if $y=\alpha^{-1}x\alpha$, then $Z(y)=\alpha^{-1} Z(x)\alpha$. Therefore, knowing~$Z(y)$ is equivalent to knowing~$Z(x)$, via~$\alpha$. We will then be interested in~$Z(y)$ for $y\in USS(x)$.

\begin{definition}\label{defPC}
Let $x \in \mathbb B_n$ and $y\in USS(x)$, and let~$t$ be the smallest positive integer such that $\cc^t(y) = y$. Denote $p_i:=\iota(\cc^{i-1}(y))$ the positive element conjugating~$\cc^{i-1}(y)$ to~$\cc^{i}(y)$, for $i=1,\ldots,t$. Then the \textit{preferred cycling conjugator} of~$y$ is defined as
\[
  PC(y) = p_1 p_2 \cdots p_t.
\]
In other words, $PC(y)$~corresponds to the conjugating element along the whole cycling orbit of~$y$. By construction, $PC(y)$ commutes with~$y$.
\end{definition}

In the generic case (when~$USS(x)$ is minimal), it turns out that~$Z(x)$ is isomorphic to~$\mathbb Z^2$, and one can describe the generators of~$Z(y)$ for any $y\in USS(x)$ (and thus of~$Z(x)$) in a very explicit way:

\begin{theorem}\rm{\cite{GMValladares}}\label{teoJuanLoles}
Let $x\in \mathbb{B}_n$ and $y\in USS(x)$. Let $PC(y)=p_1\cdots p_t$ as above. If~$USS(x)$ is minimal, then all elements in~$USS(x)$ are rigid, $Z(x)\simeq Z(y) \simeq \mathbb Z^2$, and one of the following conditions holds:
\begin{itemize}
\item[(i)] $USS(x)$ has two orbits under cycling, conjugate to each other by~$\Delta$, and $Z(y) = \langle \Delta^2, PC(y)\rangle$.
\item[(ii)] $USS(x)$ has one orbit under cycling, conjugate to itself by~$\Delta$, and:
	\begin{itemize}
		\item If $\tau(y) = y$, then $Z(y) = \langle \Delta, PC(y) \rangle$.
		\item If $\tau(y) \neq y$, then~$t$ is even and $Z(y) = \langle \Delta^2, \: p_1 \cdots p_{\frac{t}{2}}\Delta^{-1} \rangle$.
	\end{itemize}
\end{itemize}
\end{theorem}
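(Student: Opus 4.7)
The overall strategy is to translate the hypothesis that $USS(x)$ is minimal into a very explicit description of the graph $\Gamma_x$, then read off the centralizer by tracking loops in $\Gamma_x$ based at $y$.

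First I would establish rigidity and the orbit dichotomy. Minimality of $USS(x)$ gives, at each vertex $y$, exactly two outgoing arrows: the black one with label $\iota(y)$ (a cycling, sending $y$ to $\cc(y)$) and the grey one with label $\partial(\varphi(y))$. Since these are the only minimal simple elements and they are distinct, they can have no non-trivial common prefix (a common prefix would itself be a minimal simple element strictly smaller than both), so $\iota(y)\wedge \partial(\varphi(y))=1$ and \autoref{lemma_rigid_iff} forces $y$ to be rigid. Conjugation by $\partial(\varphi(y))=\varphi(y)^{-1}\Delta$ equals $\tau\circ\dd$; since $y$ is rigid, $\cc$ and $\dd$ are mutually inverse, so the grey arrow is exactly the inverse of $\tau$-twisted cycling. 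Thus $\tau$ acts on the set of cycling orbits of $USS(x)$, and because $\Delta$-conjugation is $\tau$, the orbits come in $\tau$-pairs (case (i)) or in $\tau$-fixed singletons (case (ii)).

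Next I would exhibit the claimed centralizing elements. The element $\Delta^2$ is central in $\mathbb B_n$, and $PC(y)=p_1\cdots p_t$ conjugates $y$ around its full cycling orbit, so it lies in $Z(y)$ by construction. In case (i), $\Delta$ moves $y$ to the other orbit, so $\Delta\notin Z(y)$, leaving $\langle \Delta^2, PC(y)\rangle\subseteq Z(y)$. In case (ii) with $\tau(y)=y$, $\Delta$ itself centralizes $y$, giving $\langle \Delta,PC(y)\rangle\subseteq Z(y)$. In case (ii) with $\tau(y)\neq y$, the orbit being $\tau$-stable means $\tau(y)=\cc^k(y)$ for some $0<k<t$; since $\tau^2=\mathrm{id}$ one gets $\cc^{2k}(y)=y$, hence $t\mid 2k$ but $t\nmid k$, forcing $t$ even and $k=t/2$. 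Combining $\iota\circ\tau=\tau\circ\iota$ with $\cc\circ\tau=\tau\circ\cc$ yields $p_{t/2+i}=\tau(p_i)$, from which $p_1\cdots p_{t/2}$ conjugates $y$ to $\tau(y)=\Delta^{-1}y\Delta$, so $p_1\cdots p_{t/2}\Delta^{-1}\in Z(y)$, and its square is $PC(y)\Delta^{-2}$.

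The key remaining step, and the main obstacle, is showing that these inclusions are equalities and that $Z(y)\simeq\mathbb Z^2$. Here I would invoke the standard Gebhardt-graph machinery: any $\alpha\in Z(y)$ can be written, after multiplication by a suitable power of $\Delta$, as a positive conjugator that factors into minimal simple elements along a closed walk in $\Gamma_x$ based at $y$. Under minimality the underlying directed graph is just a cycle of length~$t$ (case (i), where the grey arrows run backwards around the same cycle) or a $\tau$-symmetric cycle (case (ii)), so its loop monoid based at $y$ is generated by the single cycling loop $PC(y)$. Combining this with the normal $\langle\Delta^2\rangle$ or $\langle\Delta\rangle$ factor one obtains exactly the presentations claimed, and the two given generators are manifestly $\preccurlyeq$-independent: one is central (or a positive non-$\Delta$-power times $\Delta^{-1}$) while the other is a non-trivial positive word whose normal form has canonical length $t\cdot\ell(y)>0$, so no non-trivial relation can hold in the free abelian group they generate. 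Finally $Z(x)\simeq Z(y)$ via conjugation by any element of $\mathbb B_n$ carrying $x$ to $y$, completing the proof.
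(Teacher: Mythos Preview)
This theorem is not proved in the present paper at all: it is quoted from \cite{GMValladares} (González-Meneses--Valladares) and used as a black box, so there is no ``paper's own proof'' to compare against. Your outline therefore has to be judged on its own merits.

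The first two-thirds are fine. Rigidity follows directly from the definition of minimality together with \autoref{lemma_rigid_iff} (exactly as the paper remarks after \autoref{def_USSminimal}); the orbit dichotomy, the inclusions $\langle\Delta^2,PC(y)\rangle\subseteq Z(y)$ etc., and the argument that $t$ is even in case~(ii) with $\tau(y)\neq y$ are all correct. One small slip: $\ell(PC(y))=t$, not $t\cdot\ell(y)$, since each $p_i$ is a single simple factor; this does not affect your independence argument.

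The genuine gap is in the last step. Your description of $\Gamma_x$ in case~(i) as ``just a cycle of length~$t$ where the grey arrows run backwards'' is not accurate: the graph has $2t$ vertices (the two orbits), and the grey arrow at $\cc^i(y)$ lands in the \emph{other} orbit, at $\tau(\cc^{i-1}(y))$. So closed walks at $y$ are not automatically powers of the cycling loop. What makes the argument work is the identity you did not write down: since $p$ is even in case~(i) one has $\varphi(\cc^i(y))=p_i$, hence the grey label at $\cc^i(y)$ is $p_i^{-1}\Delta$, i.e.\ the grey arrow equals the inverse of a black arrow followed by $\Delta$. This relation lets you rewrite the label of any closed walk at $y$ as $PC(y)^a\Delta^b$, and returning to $y$ rather than $\tau(y)$ forces $b$ even. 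The analogous rewriting handles case~(ii). Without making this identity explicit, the assertion that ``the loop monoid is generated by the single cycling loop'' is unsupported; filling it in is exactly the missing idea, and with it your sketch becomes a proof along the lines of the one in \cite{GMValladares}.
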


\section{$k$-th root problem}\label{section_root}

Now we come to the central problem in this paper: given $x\in \mathbb B_n$ and an integer $k>1$, find a $k$-th root of~$x$. In other words, we want to either find $a\in \mathbb B_n$ such that $a^k=x$, or show that such a braid does not exist.

Notice that if $a^k=x$ then~$a$ belongs to~$Z(x)$, the centralizer of~$x$. It is interesting to know that finding a single solution~$a$ to the $k$-th root equation is basically the same as finding all possible solutions, as the complete set of solutions coincides with the conjugacy class of~$a$ in~$Z(x)$:

\begin{proposition}\label{roots_are_conjugate_in_Z(x)}
Let $a,x\in \mathbb B_n$ be such that $a^k=x$ for some integer $k>1$. Then the set $\sqrt[k]{x}$ of $k$-th roots of $x$ is precisely
$$
    \sqrt[k]{x}= a^{Z(x)}=\left\{b\in \mathbb B_n \, | \, \ b=u^{-1}au,\ u\in Z(x) \right\}.
$$
\end{proposition}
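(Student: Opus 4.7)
The plan is to prove the two inclusions separately; one is an immediate calculation, and the other rests on the known fact that roots in braid groups are unique up to conjugacy.

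For the easy inclusion $a^{Z(x)}\subseteq \sqrt[k]{x}$: given $u\in Z(x)$, I would simply compute
\[
   (u^{-1}au)^k = u^{-1}a^k u = u^{-1}xu = x,
\]
so every conjugate of $a$ by an element of $Z(x)$ is indeed a $k$-th root of $x$.

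For the reverse inclusion $\sqrt[k]{x}\subseteq a^{Z(x)}$, let $b\in \mathbb B_n$ satisfy $b^k=x$. Then $a^k=b^k$, and at this point I would invoke the theorem of Gonz\'alez-Meneses (\emph{The $n$th root of a braid is unique up to conjugacy}) stating that any two braids with equal $k$-th powers are conjugate in $\mathbb B_n$. This gives a (possibly non-central) element $u\in \mathbb B_n$ with $b=u^{-1}au$. The last step is to upgrade $u$ to an element of $Z(x)$: raising the relation to the $k$-th power yields
\[
   x = b^k = u^{-1}a^k u = u^{-1}xu,
\]
so $u\in Z(x)$ automatically, and $b\in a^{Z(x)}$ as required.

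The only genuinely non-trivial input is the uniqueness-up-to-conjugacy theorem for $k$-th roots in braid groups; without it, one cannot even produce a candidate conjugator $u$. Once $u$ exists, the verification that $u$ centralizes $x$ is a one-line computation, so that is where the main difficulty of the proposition lies (in the reference, not in the argument presented here).
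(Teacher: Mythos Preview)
Your argument is correct and essentially identical to the paper's own proof: both directions are handled exactly as you describe, with the only substantive ingredient being the uniqueness-up-to-conjugacy theorem of Gonz\'alez-Meneses for $k$-th roots in braid groups. The only cosmetic difference is the order in which the two inclusions are presented.
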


\begin{proof}
In \cite{GM-roots}, the second author proved that the $k$-th root of a braid is unique, up to conjugacy. That is, if $a,b\in \mathbb B_n$ satisfy $a^k=b^k=x$, then $a=u^{-1}bu$ for some $u\in \mathbb B_n$. Then one has $x= b^k = u^{-1} a^k u =u^{-1} x u$, and hence $u\in Z(x)$. This proves that $\sqrt[k]{x}\subset a^{Z(x)}$.

On the other hand, if $b=a^{Z(x)}$ and we write $b=u^{-1}au$ for some $u\in Z(x)$, we have $b^k=u^{-1}a^k u = u^{-1} x u = x$, so $b\in \sqrt[k]{x}$.
\end{proof}

Observe that $a^k=x$ if and only if  $(\alpha^{-1}a\alpha)^k=\alpha^{-1}x\alpha$ for any $\alpha\in \mathbb B_n$. Hence, given~$x$, it suffices to solve the $k$-th root problem for any conjugate of~$x$, for instance for some $y \in USS(x)$.

We will focus our attention in the generic case in which $USS(x)$ is minimal. Recall from \autoref{teoJuanLoles} that in this case $Z(x)\simeq Z(y)\simeq \mathbb Z^2$. If we express the centralizer of~$y$ as $Z(y)=\langle v,w\rangle$, where~$v$ and~$w$ commute, we know that $y$ has the form $y=v^c w^d$, for some $c,d\in \mathbb Z$ (and that this expression is unique, as any other expression would yield a different element of~$Z(y)$). If we are able to express $y$ in this way, then the $k$-th root problem is trivially solved:

\begin{proposition}\label{prop_raiz}
Let $x\in \mathbb B_n$. Let $y\in USS(x)$ and suppose that $USS(x)$ is minimal. Let $Z(y)=\langle v,w\rangle$ and let $c,d\in \mathbb Z$ be such that $y=v^c w^d$. Then~$y$ admits a $k$-th root if and only if both $c$ and $d$ are multiples of $k$, and in this case the only $k$-th root of~$y$ is:
\[
     a= v^{\frac{c}{k}} w^{\frac{d}{k}}.
\]
\end{proposition}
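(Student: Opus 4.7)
The plan is to combine the explicit description of $Z(y)$ from \autoref{teoJuanLoles} with the ``roots are conjugate in the centralizer'' statement of \autoref{roots_are_conjugate_in_Z(x)}. The very first observation is that any $k$-th root $a$ of $y$ automatically commutes with $y=a^k$, hence $a\in Z(y)$. Since $USS(x)$ is assumed minimal, \autoref{teoJuanLoles} gives $Z(y)=\langle v,w\rangle\simeq \mathbb Z^2$ with $v$ and $w$ commuting, so $a$ admits a \emph{unique} expression $a=v^{e}w^{f}$ with $e,f\in\mathbb Z$.

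Next, I would use that $v$ and $w$ commute to compute $a^{k}=v^{ke}w^{kf}$, and then equate this with the given decomposition $y=v^{c}w^{d}$. The uniqueness of exponents in the free abelian group $Z(y)\simeq\mathbb Z^2$ turns the equality $v^{ke}w^{kf}=v^{c}w^{d}$ into the two integer equations $ke=c$ and $kf=d$. This immediately forces $k\mid c$ and $k\mid d$, and pins down $e=c/k$, $f=d/k$. Conversely, when $k$ divides both $c$ and $d$, the element $v^{c/k}w^{d/k}$ is by construction a $k$-th root of $y$, so the divisibility condition is also sufficient, proving the ``if and only if'' part together with the stated formula for $a$.

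For the uniqueness of the root, I would appeal to \autoref{roots_are_conjugate_in_Z(x)}: the set $\sqrt[k]{y}$ of all $k$-th roots of $y$ equals $a^{Z(y)}$, the conjugacy class of $a$ inside $Z(y)$. But $Z(y)$ is abelian, so this conjugacy class is the singleton $\{a\}$; hence $a=v^{c/k}w^{d/k}$ is the \emph{only} $k$-th root of $y$.

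The argument is short because all the structural work has already been done: the identification $Z(y)\simeq\mathbb Z^2$ comes from \autoref{teoJuanLoles}, and the uniqueness-up-to-conjugacy of $k$-th roots comes from \cite{GM-roots} via \autoref{roots_are_conjugate_in_Z(x)}. The only two points requiring care are the implication ``$a^{k}=y \Rightarrow a\in Z(y)$'' (which is immediate) and the use of the freeness of $Z(y)$ to read off the exponents uniquely; I do not anticipate any real obstacle here.
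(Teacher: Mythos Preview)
Your proof is correct and follows essentially the same approach as the paper: both argue that any $k$-th root lies in $Z(y)\simeq\mathbb Z^2$, use uniqueness of exponents there to force $k\mid c$ and $k\mid d$, check the converse directly, and deduce uniqueness of the root from \autoref{roots_are_conjugate_in_Z(x)} together with the abelianness of $Z(y)$. The only difference is cosmetic ordering (the paper proves uniqueness first, you prove it last).
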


\begin{proof}
We know from \autoref{teoJuanLoles} that $Z(y)\simeq \mathbb Z^2$, so it is abelian. Hence, by \autoref{roots_are_conjugate_in_Z(x)}, if a $k$-th root~$a$ of~$y$ exists then $\sqrt[k]{y} = a^{Z(y)} = \{a\}$. Therefore, if a $k$-th root exists, it is unique.

Suppose that the $k$-th root problem for~$y$ has a solution $a\in \mathbb B_n$. Then $a\in Z(y)$, and hence $a=v^r w^s$ for some $r,s\in \mathbb Z$. But since~$v$ and~$w$ commute, we have:
\[
   v^c w^d = y = a^k = (v^r w^s)^k = v^{rk} w^{sk}.
\]
This implies that~$c$ and~$d$ are multiples of~$k$, and that $a= v^r w^s= v^{\frac{c}{k}} w^{\frac{d}{k}}.$

Conversely, if~$c$ and~$d$ are multiples of~$k$, we write $c=rk$ and $d=sk$ for some integers $r, s$, and we consider the element $a=v^r w^s$. Since~$v$ and~$w$ commute, it follows that $a^k=y$.
\end{proof}

By the above result, it follows that the only difficulty in solving the $k$-th root problem, in the generic case in which $USS(x)$ is minimal, is to express some $y\in USS(x)$ in terms of the generators of~$Z(y)$. We know from \autoref{teoJuanLoles} that there are three possible cases, depending on whether~$USS(x)$ has two orbits under cycling, or has one orbit with $\tau(y)=y$, or has one orbit with $\tau(y)\neq y$. The three following results address each case:

\begin{proposition}\label{case1}
Let $x\in \mathbb B_n$, and let $y=\Delta^p y_1\cdots y_l\in USS(x)$, written in left normal form. Suppose that $USS(x)$ is minimal. Suppose also that $USS(x)$ has two orbits under cycling, conjugate to each other by~$\Delta$. Let $v=\Delta^2$ and $w=PC(y)=p_1\cdots p_t$, so:
$$
    Z(y) = \langle v, w\rangle = \langle \Delta^2, PC(y)\rangle.
$$
If we write $c=p/2$ and $d=l/t$, then~$c$ and~$d$ are integers and we have: $y=v^c w^d$.
\end{proposition}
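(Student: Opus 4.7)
The plan is to reduce the identity $y = v^c w^d$ to a purely combinatorial statement about the periodicity of the sequence of simple factors of $y$, and then to extract that periodicity from the hypothesis $\cc^t(y)=y$ together with the uniqueness of left normal forms.

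First I would verify that $c$ and $d$ are integers. Since $USS(x)$ is minimal, all its elements are rigid; in particular $y$ is rigid. Recall from the discussion preceding \autoref{teoJuanLoles} that, for a rigid braid with odd infimum, the (single) cycling orbit is conjugate to itself by $\Delta$, so $USS(x)$ consists of a single orbit. Hence the hypothesis of case (i) forces $p$ to be even, and $c=p/2\in\mathbb Z$. Next, because $p$ is even and $y$ is rigid, iterated cycling acts on the left normal form $\Delta^p y_1\cdots y_l$ simply by cyclically permuting the factors $y_1,\ldots,y_l$ (no $\tau$ appears). Therefore $\cc^t(y)=y$ means a cyclic shift by $t$ positions fixes the tuple $(y_1,\ldots,y_l)$, which forces $t\mid l$, so $d=l/t\in\mathbb Z$.

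Next I would evaluate $w=PC(y)$ explicitly. For each $i=1,\ldots,t$, the braid $\cc^{i-1}(y)$ has left normal form
\[
\cc^{i-1}(y)=\Delta^p y_i y_{i+1}\cdots y_l y_1\cdots y_{i-1},
\]
so its initial factor is $\iota(\cc^{i-1}(y))=\tau^{-p}(y_i)=y_i$, since $p$ is even. Hence $p_i=y_i$ and
\[
w=PC(y)=p_1 p_2\cdots p_t=y_1 y_2\cdots y_t.
\]

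Finally, I would exploit the periodicity encoded in $\cc^t(y)=y$. The cyclic shift by $t$ positions of $(y_1,\ldots,y_l)$ equals $(y_{t+1},\ldots,y_l,y_1,\ldots,y_t)$, and this tuple equals $(y_1,\ldots,y_l)$ by the uniqueness of left normal form. Consequently $y_{i+t}=y_i$ for every $i$ (indices taken modulo $l$). Splitting the product $y_1\cdots y_l$ into $d=l/t$ blocks of length $t$ and using this periodicity gives
\[
y_1 y_2\cdots y_l=(y_1\cdots y_t)(y_{t+1}\cdots y_{2t})\cdots (y_{l-t+1}\cdots y_l)=(y_1\cdots y_t)^d=w^d,
\]
so $y=\Delta^p (y_1\cdots y_l)=\Delta^{2c} w^d=v^c w^d$, as desired.

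The only place where anything subtle happens is the very first step: justifying that $p$ is even and that $t\mid l$. Once these are in place, the rest is an immediate consequence of rigidity and the fact that, for rigid braids of even infimum, cycling is literally a cyclic shift of the normal-form factors; no further use of the graph $\Gamma_x$ or of the minimality of $USS(x)$ beyond rigidity is required.
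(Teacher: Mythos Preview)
Your argument is correct and follows essentially the same path as the paper's proof: rule out odd $p$ (else $\tau(y)=\cc^l(y)$ lies in the cycling orbit of $y$, contradicting the two-orbit hypothesis), then use rigidity with even infimum to identify cycling with a plain cyclic permutation of the normal-form factors, whence $PC(y)=y_1\cdots y_t$, $t\mid l$, and $y=\Delta^{p}(y_1\cdots y_t)^{l/t}=v^cw^d$. One small imprecision to fix: the sentence ``a cyclic shift by $t$ positions fixes the tuple $(y_1,\ldots,y_l)$, which forces $t\mid l$'' is not valid as stated (e.g.\ a shift by $4$ fixes a $6$-tuple of period $2$); you need to invoke that $t$ is, by definition, the \emph{smallest} positive integer with $\cc^t(y)=y$, together with $\cc^l(y)=y$, to conclude $t\mid l$ --- exactly as the paper does.
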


\begin{proof}
We know that, since $USS(x)$ is minimal, it consists of rigid elements. Hence iterated cycling corresponds to a cyclic permutation of the factors in the normal form of~$y$ (with possible conjugations by~$\Delta$, if~$p$ is odd).

Suppose that~$p$ is odd. Then~$\cc^l(y)$ is obtained from~$y$ by cyclically permuting all its~$l$ factors, conjugating all of them by $\Delta$. Hence $\cc^l(y)=\tau(y)$. This implies that $\tau(y)=\Delta^{-1}y\Delta$ is in the same orbit of~$y$ under cycling, but this is a contradiction with the hypotheses, as $USS(x)$ has two distinct orbits (the one containing~$y$ and the one containing~$\tau(y)$). Therefore~$p$ is even.

Since~$p$ is even, iterated cyclings of~$y$ correspond exactly to cyclic permutations of the factors of~$y$. By definition, $t$~is the smallest positive integer such that $\cc^t(y)=y$, and it is then clear that $\cc^m(y)=y$ for some positive integer~$m$ if and only if~$m$ is a multiple of~$t$. Since $\cc^l(y)=y$, we finally obtain that~$l$ is a multiple of~$t$. Then the normal form of~$y$ is as follows:
\[
    y= \Delta^p y_1\cdots y_l = \Delta^p (y_1\cdots y_t)(y_1\cdots y_t)\cdots (y_1\cdots y_t),
\]
where $PC(y)=y_1\cdots y_t$, and there are~$l/t$ parenthesized factors.

Now, if we write $c=p/2$ and $d=l/t$, these numbers are integers and we have:
\[
  v^c w^d = (\Delta^2)^c (PC(y))^d = \Delta^{2c} (y_1\cdots y_t)^d = \Delta^p y_1\cdots y_l = y.
\]
\end{proof}

\begin{proposition}\label{case2}
Let $x\in \mathbb B_n$, and let $y=\Delta^p y_1\cdots y_l\in USS(x)$, written in left normal form. Suppose that $USS(x)$ is minimal. Suppose also that $USS(x)$ has one orbit under cycling, conjugate to itself by~$\Delta$, and that $\tau(y)=y$. Let $v=\Delta$ and $w=PC(y)=p_1\cdots p_t$, so:
$$
    Z(y) = \langle v, w\rangle = \langle \Delta, PC(y)\rangle.
$$
If we write $c=p$ and $d=l/t$, then~$c$ and~$d$ are integers and we have: $y=v^c w^d$.
\end{proposition}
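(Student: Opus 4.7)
The plan is to mirror the strategy of \autoref{case1}, but with careful bookkeeping to handle the case in which~$p$ is odd, where iterated cyclings involve applications of~$\tau$. Since $USS(x)$ is minimal, $y$~is rigid, so iterated cyclings permute its factors cyclically (and apply~$\tau$ if~$p$ is odd).

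First I would compute $\cc^l(y)$. If~$p$ is even, cyclings are pure cyclic permutations, so $\cc^l(y)=y$. If~$p$ is odd, the explicit formula in the preliminaries gives $\cc^l(y)=\Delta^p\tau(y_1)\cdots\tau(y_l)=\tau(y)$, which equals~$y$ by hypothesis. In both cases $\cc^l(y)=y$, so writing $l=qt+r$ with $0\leq r<t$ and using $\cc^l=\cc^r\circ\cc^{qt}$, the minimality of~$t$ forces $r=0$. Hence $d=l/t$ is a positive integer.

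Next I would identify $PC(y)=p_1\cdots p_t$ explicitly in terms of the factors of~$y$. Since~$y$ is rigid, each $\cc^{i-1}(y)$ is already in left normal form with first factor~$y_i$, so $p_i=\iota(\cc^{i-1}(y))$ equals $y_i$ when~$p$ is even and $\tau(y_i)$ when~$p$ is odd. The remaining task is to verify $y=\Delta^p\,PC(y)^d$. For $p$ even, equating the rigid left normal forms of $\cc^t(y)=\Delta^p y_{t+1}\cdots y_l y_1\cdots y_t$ and $y=\Delta^p y_1\cdots y_l$ factor by factor gives $y_{j+t}=y_j$ for $j=1,\ldots,l-t$, so the factors are $t$-periodic and $y=\Delta^p(y_1\cdots y_t)^d=\Delta^p PC(y)^d$.

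The only genuinely new point, and the step I expect to be the main obstacle, is the $p$ odd case. Here the normal form $\cc^t(y)=\Delta^p y_{t+1}\cdots y_l\,\tau(y_1)\cdots\tau(y_t)$ must be compared to $y=\Delta^p y_1\cdots y_l$. Equating positions~$1$ through $l-t$ yields again the $t$-periodicity $y_{j+t}=y_j$, while equating the final~$t$ positions yields $\tau(y_i)=y_{l-t+i}$ for $i=1,\ldots,t$. Since $l-t=(d-1)t\equiv 0\pmod t$, periodicity gives $y_{l-t+i}=y_i$, and combining with the previous identity forces $\tau(y_i)=y_i$ for $i=1,\ldots,t$. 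Consequently $PC(y)=\tau(y_1)\cdots\tau(y_t)=y_1\cdots y_t$, and once more $y=\Delta^p(y_1\cdots y_t)^d=\Delta^p PC(y)^d=v^c w^d$ with $c=p$ and $d=l/t$, completing the proof.
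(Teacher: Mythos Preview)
Your argument is correct, but it works harder than necessary compared with the paper's proof. The paper exploits the hypothesis $\tau(y)=y$ at the very first step: since the left normal form of~$\tau(y)$ is $\Delta^p\tau(y_1)\cdots\tau(y_l)$, uniqueness of normal forms gives $\tau(y_i)=y_i$ for \emph{all} $i=1,\ldots,l$ immediately. Once each factor is $\tau$-invariant, the parity of~$p$ becomes irrelevant (the $\tau$'s appearing in the odd-$p$ cycling formulas are harmless), so cyclings are pure cyclic permutations and the rest is as in \autoref{case1}. Your route instead splits on parity, and in the odd case recovers $\tau(y_i)=y_i$ only a posteriori (and only for $i\leq t$) by comparing the normal forms of~$y$ and~$\cc^t(y)$; this is valid but circuitous, since the same conclusion (for all~$i$) is available for free from the hypothesis. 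The upshot: your method is self-contained in the sense that it never invokes $\tau(y)=y$ beyond establishing $\cc^l(y)=y$, whereas the paper's method is shorter and makes transparent why this case behaves exactly like the even-$p$ situation of \autoref{case1}.
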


\begin{proof}
We know that the left normal form of $\tau(y)$ is $\Delta^p \tau(y_1)\cdots \tau(y_l)$. Since $\tau(y)=y$, the normal forms of~$y$ and~$\tau(y)$ must coincide, hence $\tau(y_i)=y_i$ for $i=1,\ldots,l$.

This implies that iterated cyclings correspond to cyclic permutations of the factors of~$y$. We do not care about the parity of~$p$, as every factor of~$y$ is invariant under $\tau$. It then follows that $PC(y)=y_1\cdots y_t$, that $t$ divides $l$ and that the normal form of $y$ is:
$$
   y = \Delta^p y_1\cdots y_l = \Delta^p (y_1\cdots y_t)(y_1\cdots y_t)\cdots (y_1\cdots y_t),
$$
where there are $l/t$ parenthesized factors.

Now, if we write $c=p$ and $d=l/t$, these numbers are integers and we have:
$$
   v^c w^d = \Delta^c (PC(y))^d = \Delta^{c} (y_1\cdots y_t)^d = \Delta^p y_1\cdots y_l = y.
$$
\end{proof}

\begin{proposition}\label{case3}
Let $x\in \mathbb B_n$, and let $y=\Delta^p y_1\cdots y_l\in USS(x)$, written in left normal form. Suppose that $USS(x)$ is minimal. Suppose also that $USS(x)$ has one orbit under cycling, conjugate to itself by $\Delta$, and that $\tau(y)\neq y$. Let $v=\Delta^2$, $PC(y) = p_1 \cdots p_t$ and $w=p_1\cdots p_{\frac{t}{2}}\Delta^{-1}$ (recall from \autoref{teoJuanLoles} that $t$ is even), so:
$$
    Z(y) = \langle v, w\rangle = \langle \Delta, \: p_1\cdots p_{\frac{t}{2}}\Delta^{-1}\rangle.
$$
If we write $c=\frac{pt+2l}{2t}$ and $d=\frac{2l}{t}$, then $c$ and $d$ are integers and we have: $y=v^c w^d$.
\end{proposition}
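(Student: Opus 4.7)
The plan is to follow the strategy of \autoref{case1} and \autoref{case2}: since $USS(x)$ is minimal, $y$ is rigid, so iterated cyclings cyclically permute the factors of the left normal form of~$y$ (with a $\tau$-twist when $p$ is odd), and the goal is to express $y$ as $v^{c}w^{d}$. The key preliminary is to locate $\tau(y)$ in the cycling orbit of~$y$: since the unique orbit is $\Delta$-invariant and $\tau$ commutes with $\cc$ (both preserve the Garside structure), one has $\tau(y)=\cc^{k}(y)$ for some $k$; combining $\tau^{2}=\mathrm{id}$ with the minimality of $t$ and the hypothesis $\tau(y)\neq y$ then forces $k=t/2$, so in particular $t$ is even. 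The same reasoning yields $p_{t/2+i}=\tau(p_{i})$ for every $i$, because $\iota$ commutes with $\tau$.

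Writing $q:=p_{1}\cdots p_{t/2}$, this gives $PC(y)=q\,\tau(q)$, and the key algebraic identity
\[
w^{2}=q\Delta^{-1}q\Delta^{-1}=q\,\tau(q)\,\Delta^{-2}=PC(y)\,\Delta^{-2},
\]
equivalently $PC(y)=v\,w^{2}$, drops out. (Commutativity $w\in Z(y)$ follows in the same way from $q^{-1}yq=\tau(y)=\Delta^{-1}y\Delta$, which rearranges to $w^{-1}yw=y$.)

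Next I set $A:=y_{1}\cdots y_{t/2}$ and observe that $\cc^{t/2}(y)=\tau(y)$ forces $y_{t/2+i}=\tau(y_{i})$, so the factors of $y$ fall into the pattern $A\tau(A)A\tau(A)\cdots$. Splitting on the parity of~$p$: if $p$ is even, rigidity forces $t\mid l$, so $m:=l/t$ is a positive integer, $d=2m$ and $c=p/2+m$ are integers, $q=A$, and $y=\Delta^{p}(A\tau(A))^{m}$. If $p$ is odd, then $\cc^{l}(y)=\tau(y)\neq y$ forces $t\nmid l$ while $\cc^{2l}(y)=y$ forces $t\mid 2l$, so $m:=2l/t$ is an \emph{odd} positive integer, $d=m$ and $c=(p+m)/2$ are integers, $q=\tau(A)$, and $y=\Delta^{p}(A\tau(A))^{(m-1)/2}A$.

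The verification $v^{c}w^{d}=y$ is immediate in the even case: $v^{c}w^{d}=\Delta^{p+2m}(PC(y)\Delta^{-2})^{m}=\Delta^{p}PC(y)^{m}=y$, using $PC(y)=vw^{2}$ and centrality of $\Delta^{2}$. The main obstacle lies in the odd case, where $d$ is odd so after pairing into $w^{2}$'s there is a leftover factor $w=\tau(A)\Delta^{-1}$; matching the resulting expression against the normal form of $y$ requires repeatedly invoking $X\Delta=\Delta\tau(X)$ (together with the evenness of $p+m$, which makes $\Delta^{p+m}$ central) to interlock the $A$'s and $\tau(A)$'s into the correct alternating pattern. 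Carrying this bookkeeping through yields $v^{c}w^{d}=\Delta^{p}(A\tau(A))^{(m-1)/2}A=y$, as required.
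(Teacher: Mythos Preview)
Your proof is correct and follows essentially the same route as the paper: establish $t=2m$ via $\tau^2=\mathrm{id}$ and the minimality of $t$, derive the identity $PC(y)=vw^{2}$ from $w=q\Delta^{-1}$ and $PC(y)=q\,\tau(q)$, read off the alternating block structure $A\,\tau(A)\,A\cdots$ of the normal form, and then verify $y=v^{c}w^{d}$ separately for $p$ even and $p$ odd. The only differences are cosmetic (your $A$ and $q$ play the roles of the paper's $y_{1}\cdots y_{m}$ and $p_{1}\cdots p_{m}$, and your $m$ is the paper's $r$ or $2r{+}1$ depending on parity).
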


\begin{proof}
We know from \autoref{teoJuanLoles} that~$t$ is even, but let us see why this holds. We know that there exists some $m>0$ so that $\tau(y)=\cc^m(y)$; we take $m$ as small as possible, and this implies that $\cc^r(y)\neq y$ for $0<r<m$. Now, it follows from their own definitions that~$\tau$ and~$\cc$ commute, and therefore $y = \tau^2(y) = \tau(\cc^m(y)) = \cc^m(\tau(y))= \cc^{2m}(y)$. This implies that the length of the cycling orbit of~$y$ is a divisor of~$2m$. It cannot be~$m$ (as $\cc^m(y)=\tau(y)\neq y$), and it cannot be smaller than~$m$ (as $\cc^r(y)\neq y$ for every $r<m$). Therefore, the length of the orbit is precisely $t=2m$.  The generators of~$Z(y)$ are then $v=\Delta^2$ and $w=p_1\cdots p_m \Delta^{-1}$.

We consider now two cases, depending on the parity of~$p$. If~$p$ is even, since the first $m$~cyclings transform~$y$ into~$\tau(y)$, it follows that the left normal form of~$y$ is:
$$
    y= \Delta^p \ \left(y_1\cdots y_{m}\right)\  \left(\tau(y_1)\cdots \tau(y_{m})\right)\ \cdots \  \left(y_1\cdots y_{m}\right)\ \left(\tau(y_1)\cdots \tau(y_{m})\right).
$$
Then $l=2rm$ for some positive integer~$r$.

Recall that~$PC(y)$ is the product of the first $t=2m$ conjugating elements for cycling. The first $m$~conjugating elements are $y_1,\ldots, y_m$, so $p_i=y_i$ for $i=1,\ldots,m$. The following $m$~conjugating elements are~$\tau(y_1),\ldots, \tau(y_m)$. Hence, we have that
\begin{eqnarray*}
 PC(y) & = & p_1\cdots p_t
 \\ & = & y_1\cdots y_{m} \tau(y_1)\cdots \tau(y_{m})
 \\ & = & y_1\cdots y_{m} \;\tau(y_1\cdots y_{m})
 \\ & = & p_1\cdots p_{m} \Delta^{-1} p_1\cdots p_{m} \Delta
 \\ & = & \left(p_1\cdots p_{m} \Delta^{-1}\right) \left(p_1\cdots p_{m} \Delta^{-1}\right) \Delta^2
 \\ & = &  w^2 v.
\end{eqnarray*}

Therefore, if~$p$ is even:
$$
  y= \Delta^p PC(y)^{r} = v^{\frac{p}{2}} \left(w^2v\right)^{r} = v^{\frac{p}{2}+r} w^{2r} = v^c w^d,
$$
where $c=\frac{pt+2l}{2t}$ and $d=\frac{2l}{t}$ (recall that $l=2rm=rt$).

Consider now the case when~$p$ is odd. In this case, the left normal form of~$y$ is:
$$
    y= \Delta^p \ \left(y_1\cdots y_{m}\right)\  \left(\tau(y_1)\cdots \tau(y_{m})\right)\ \cdots \  \left(y_1\cdots y_{m}\right)\ \left(\tau(y_1)\cdots \tau(y_{m})\right)\ \left(y_1\cdots y_{m}\right).
$$
Then $l=(2r+1)m$ for some positive integer $r$.

As before, $PC(y)$ is the product of the first $t=2m$ conjugating elements for cycling, but this time the first $m$ conjugating elements for cycling are $\tau(y_1),\ldots,\tau(y_m)$, and therefore $p_i=\tau(y_i)$ for $i=1,\ldots,m$. The following $m$ conjugating elements are $y_1,\ldots,y_m$, so we have:
\begin{eqnarray*}
 PC(y) & = & p_1\cdots p_t
 \\ & = & \tau(y_1)\cdots \tau(y_{m}) y_1\cdots y_{m}
 \\ & = & p_1\cdots p_{m} \;\tau(p_1\cdots p_{m})
 \\ & = & p_1\cdots p_{m} \Delta^{-1} p_1\cdots p_{m} \Delta
 \\ & = & \left(p_1\cdots p_{m} \Delta^{-1}\right) \left(p_1\cdots p_{m} \Delta^{-1}\right) \Delta^2
 \\ & = &  w^2 v.
\end{eqnarray*}

Hence $PC(y)=w^2v$ also when $p$ is odd. Finally, we have:
\begin{eqnarray*}
  y & = & \Delta^p (y_1\cdots y_{m}) (\tau(y_1)\cdots \tau(y_{m}))\cdots (y_1\cdots y_{m}) (\tau(y_1)\cdots \tau(y_{m}))(y_1\cdots y_{m})
\\ & = & (\tau(y_1)\cdots \tau(y_{m})) (y_1\cdots y_{m})\cdots (\tau(y_1)\cdots \tau(y_{m})) (y_1\cdots y_{m}) \Delta^p (y_1\cdots y_{m})
\\ & = & PC(y)^r \Delta^p (y_1\cdots y_{m})
\\ & = & PC(y)^r \Delta^{p+1} \Delta^{-1}(y_1\cdots y_{m})
\\ & = & PC(y)^r \Delta^{p+1} (p_1\cdots p_{m}) \Delta^{-1}
\\ & = & (w^2v)^r v^{\frac{p+1}{2}}w
\\ & = & v^{\frac{2r+1+p}{2}}w^{2r+1}
\\ & = & v^{c}w^{d},
\end{eqnarray*}
where $c=\frac{pt+2l}{2t}$ and $d=\frac{2l}{t}$ (recall that $2l=2(2r+1)m=(2r+1)t$ in this case).
\end{proof}

\section{An algorithm to find the $k$-th root of a braid}\label{section_algorithm}

We end this paper by providing a detailed algorithm that summarizes the results from the previous section, together with a study of its complexity.

The results of the previous section are valid when $USS(x)$ is minimal (which is the generic case). In order to have an algorithm which always succeeds in finding the $k$-th root of a braid~$x$, we need to include instructions on what to do if $USS(x)$ is not minimal. In those cases, one can use the algorithm in \cite{Lee}, which finds the $k$-th root of~$x$ in any case, considering the Garside group $G=\mathbb Z\ltimes \left(\mathbb B_n\right)^k$, where $\mathbb Z=\langle \delta\rangle $ acts on~$\left(\mathbb B_n\right)^k$ by cyclic permutation of the coordinates. S. J. Lee shows that the braid~$x$ has a $k$-th root if and only if the ultra summit set of $\delta (x,1,\ldots,1)$ in~$G$ has an element of the form $\delta (h,\ldots,h)$. Hence, computing an ultra summit set in such a group also solves the root extraction problem in~$\mathbb B_n$. It is not clear to us how big these ultra summit sets are in generic cases, while the algorithm presented in this paper is very simple, and generically very fast.

If one is not interested in programming the algorithm in~\cite{Lee}, one could tell our algorithm to return `fail' when $USS(x)$ is not minimal, obtaining an algorithm which will succeed only in the generic case. In any case, we present now the main result:

\begin{theorem}\label{maintheorem}
There is an algorithm that takes as input a braid $x=\Delta^p x_1\ldots,x_l \in \mathbb B_n$ written in left normal form, and a positive integer $k>1$, and finds a braid $a\in \mathbb B_n$ such that $a^k=x$, or guarantees that such a braid does not exist, whose generic-case complexity is $O(l(l+n)n^3\log n)$.
\end{theorem}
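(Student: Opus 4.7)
The plan is to package the results of \autoref{section_root} into an explicit procedure and to bound its generic-case cost using \autoref{teoCarusoWiest} to control the number of cyclic sliding iterations.

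\textbf{Algorithm.} Given $(x,k)$, I would first apply iterated cyclic sliding to $x$, accumulating the positive conjugator $\alpha=\mathfrak p(x)\,\mathfrak p(\mathfrak s(x))\cdots$ from \autoref{Cyclic_Sliding_fastest_conjugation}, until a rigid braid $y\in USS(x)$ is found; if a preset iteration threshold depending on $l$ is exceeded, the algorithm hands the problem to Lee's algorithm \cite{Lee}. Next, starting from $y$, I would apply $\cc$ repeatedly to enumerate the whole cycling orbit of $y$, recording its length $t$ and the preferred cycling conjugator $PC(y)=p_1\cdots p_t$. Then I would run Gebhardt's routine at the single vertex $y$ to compute its minimal simple elements and check, via \autoref{checking_minimal}, that they are exactly $\iota(y)$ and $\partial(\varphi(y))$; if this test fails, $USS(x)$ is not minimal and we again invoke Lee's algorithm.

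\textbf{Root extraction once $USS(x)$ is minimal.} I would distinguish the three cases of \autoref{teoJuanLoles} by testing whether $\tau(y)=y$, whether $\tau(y)$ lies elsewhere in the cycling orbit of $y$, or whether $\tau(y)$ lies outside the orbit. Each case is handled by the corresponding \autoref{case1}, \autoref{case2} or \autoref{case3}, which produces explicit integers $c,d$ and generators $v,w$ of $Z(y)$ with $y=v^c w^d$. By \autoref{prop_raiz}, $y$ admits a $k$-th root if and only if $k\mid c$ and $k\mid d$; if so, the algorithm returns $a=\alpha\bigl(v^{c/k}w^{d/k}\bigr)\alpha^{-1}$, and otherwise returns ``no $k$-th root''. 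Correctness of the output follows directly from the cited propositions.

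\textbf{Complexity and main obstacle.} By \autoref{teoCarusoWiest}, a generic $x$ admits a positive conjugator $\alpha$ to a rigid braid with $\ell(\alpha)<l$, and by \autoref{Cyclic_Sliding_fastest_conjugation} iterated cyclic sliding produces precisely this minimal $\alpha$; the number of sliding steps is therefore $O(l)$. Each individual sliding step, each cycling, and each multiplication of a length-$l$ normal form by a simple factor takes $O((l+n)n\log n)$ by standard Garside arithmetic, so the sliding phase and the orbit-enumeration phase together contribute $O(l(l+n)n\log n)$, as do the computation of $v^{c/k}w^{d/k}$ (whose exponents are bounded by $l$) and the final conjugation by $\alpha$. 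The dominant term comes from the single call to Gebhardt's minimal-simple-elements routine at $y$, whose cost in this setting is $O(l(l+n)n^3\log n)$; summing, we obtain the stated bound. The delicate point is maintaining both economies simultaneously: calling the expensive Gebhardt routine only once, which is justified by \autoref{checking_minimal}, and bounding the number of sliding iterations by $O(l)$ rather than by any worst-case Garside-theoretic estimate, which is exactly what \autoref{teoCarusoWiest} guarantees in the generic case and prevents the complexity from picking up a dependence on the potentially exponential worst-case size of $USS(x)$.
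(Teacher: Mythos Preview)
Your algorithmic outline matches the paper's, but the complexity accounting has a genuine gap in the sliding phase. From \autoref{teoCarusoWiest} you get a positive conjugator $\alpha$ with $\ell(\alpha)<l$, and you conclude that the number of cyclic-sliding iterations is $O(l)$. This is not justified: $\ell(\alpha)$ is the \emph{canonical} length (number of simple factors in the normal form of $\alpha$), whereas the number of sliding steps is the number of nontrivial simple elements $\mathfrak p(\cdot)$ whose product equals $\alpha$. A product of many nontrivial simples can have small canonical length (e.g.\ $\sigma_1\sigma_2\sigma_1\cdots=\Delta$ has canonical length~$1$). What does bound the number of steps is the \emph{atom-length} of $\alpha$: since the minimal $\alpha$ has no $\Delta$ in its normal form (otherwise $\Delta^{-1}\alpha$ would be a strictly smaller positive conjugator to the rigid braid $\tau(y)$), each of its at most $l$ normal-form factors contributes at most $\tfrac{n(n-1)}{2}-1$ atoms, so the number of sliding iterations is $O(ln^2)$, not $O(l)$. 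Consequently the sliding phase costs $O(l^2n^3\log n)$, and your ``preset iteration threshold depending on $l$'' must in fact be $l\bigl(\tfrac{n(n-1)}{2}-1\bigr)$, depending on both $l$ and $n$.

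A second, smaller gap: you invoke ``Gebhardt's minimal-simple-elements routine'' at $y$ and assert its cost is $O(l(l+n)n^3\log n)$ without justification. The general routine has no such known bound; the paper instead exploits rigidity of $y$ to compute the minimal simple elements directly, by applying iterated cyclic sliding to each $\sigma_i^{-1}y\sigma_i$ (for the $n-1$ atoms $\sigma_i$) until rigidity is reached, which takes at most $O(n^2)$ slidings per atom since the target conjugator is a prefix of the simple element $\iota(y)$ or $\partial(\varphi(y))$. This yields $O(n^3)$ slidings, hence $O(ln^4\log n)$ for this step. Summing the corrected sliding phase $O(l^2n^3\log n)$ and this $O(ln^4\log n)$ gives the stated $O(l(l+n)n^3\log n)$; your final bound happens to be right, but the route to it needs both corrections.
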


\begin{proof}
\autoref{algo}, which uses the results from the previous section, constitutes a proof of the theorem. Let us describe it in detail.

 The input is a braid $x=\Delta^p x_1\cdots x_l\in \mathbb B_n$ in left normal form and an integer $k>1$. First (lines 2-5), the algorithm applies iterated cyclic sliding to~$x$, checking at each iteration whether the resulting braid~$y$ is rigid. As we will now see, if the algorithm applies cyclic sliding $l\left(\frac{n(n-1)}{2}-1\right)$ times and no rigid braid is obtained, then we are not in the generic case stated in \autoref{teoCarusoWiest}, hence the algorithm in~\cite{Lee} is applied. The number $l\left(\frac{n(n-1)}{2}-1\right)$ is precisely $l$~times the length of~$\Delta$ minus one. Recall from \autoref{teoCarusoWiest} that in the generic case there is a positive element~$\alpha$ conjugating~$x$ to a rigid braid, such that $\ell(\alpha)< \ell(x)=l$. If~$\alpha$ is the smallest possible one, there is no~$\Delta$ in its normal form. Hence, the length of~$\alpha$ in terms of atoms ($\sigma_i$'s) is at most $l\left(\frac{n(n-1)}{2}-1\right)$. Now, from \autoref{Cyclic_Sliding_fastest_conjugation} we know that the smallest positive conjugator to a rigid braid is obtained by iterated cyclic sliding. Since at every iteration the conjugating element gets bigger, if we are in the generic case we must obtain a rigid element in at most $l\left(\frac{n(n-1)}{2}-1\right)$ iterations, as we claimed.

 If the braid~$y$ obtained after the loop in lines 2-5 is rigid, as the algorithms stores the conjugating elements for cyclic sliding at each iteration, we will have a braid~$\alpha$ such that $\alpha^{-1}x\alpha=y$.

 Now the algorithm checks whether $USS(y)$ is minimal (the generic case we are interested in), as explained in \autoref{checking_minimal}, checking whether the minimal simple elements for~$y$ are precisely~$\iota(y)$ and~$\partial(\varphi(y))$.

 In general, it is not known how fast it is to compute the minimal simple elements for a given arbitrary braid~$y$. But if~$y$ is rigid, one can easily find the minimal simple elements for~$y$. We know that every such element must be a prefix of either~$\iota(y)$ or~$\partial(\varphi(y))$. For every generator~$\sigma_i$, one can consider $\sigma_i^{-1}y\sigma_i$ and apply iterated cyclic sliding to it, until it becomes rigid. The obtained conjugating element is the smallest conjugating element from $y$ to a rigid braid, having~$\sigma_i$ as a prefix. We do this for all~$\sigma_i$ which are prefixes of~$\iota(y)$, and either we find a conjugating element which is a proper prefix of~$\iota(y)$ (in which case~$\iota(y)$ is not minimal), or we have shown that~$\iota(y)$ is minimal. Then we do the same for all generators which are prefixes of~$\partial(\varphi(y))$. The number of iterations in each case is bounded by the length  of~$\iota(y)$ (resp.~$\partial(\varphi(y))$), which are simple elements, while the total number of generators is~$n-1$. So the total number of cyclic slidings used to check whether~$\iota(y)$ and~$\partial(\varphi(y))$ are minimal (and hence whether $USS(y)$ is minimal) is~$O(n^3)$.

 If $USS(y)$ is not minimal, we are not in the generic case stated in \autoref{teoCarusoWiest}, hence the algorithm in \cite{Lee} is applied. Otherwise, we are in one of the situations described in \autoref{case1}, \autoref{case2} and \autoref{case3}. The rest of the algorithm just applies these propositions together with \autoref{prop_raiz}: after decomposing~$y$ in the form $y=v^cw^d$, it checks whether both~$c$ and~$d$ are multiples of~$k$. If this is the case, then $v^{\frac{c}{k}}w^{\frac{d}{k}}$ is the (unique) $k$-th root of~$y$, and since $x=\alpha y \alpha^{-1}$, it follows that $\alpha v^{\frac{c}{k}}w^{\frac{d}{k}}\alpha^{-1}$ is the desired $k$-th root of~$x$; otherwise, the algorithm returns the sentence ``A $k$-th root does not exist".

We study now the complexity of our algorithm, assuming that we are in the generic case in which $USS(x)$ is minimal, and we can quickly conjugate~$x$ to a rigid braid. Computing the complement or applying~$\tau$ to a simple element is~$O(n)$, and computing $s\wedge t$ for two simple elements~$s$ and~$t$ is~$O(n\log n)$ \cite[Proposition~9.5.1]{Epsteinetal}. Starting with an element~$y$ in left normal form, computing~$\mathfrak s(y)$ consists of computing a complement ($\partial(\varphi(y))$), a meet ($\iota(x)\wedge \partial(\varphi(x))$) and the normal form of the conjugate of~$y$ by a simple element of length at most~$l$ (which is $O(ln\log n)$). Hence the total complexity of applying a cyclic sliding is $O(ln\log n)$.

 The first loop (lines 2-5) is repeated~$O(ln^2)$ times, checking the condition takes $O(n\log n)$ and the body of the loop takes $O(ln\log n)$. Hence the total complexity of the loop in lines 2-5 is $O(l^2n^3\log n)$.

 The ``If" statement in lines 6-7 is negligible compared with the previous ``while" loop.

 Next, in lines 8-9 the algorithm checks whether~$\iota(y)$ and~$\partial(\varphi(y))$ are minimal, for the rigid element~$y$. By the arguments above, this applies~$O(n^3)$ cyclic slidings, hence the total complexity of this step is $O(ln^4\log n)$.

 In line 11 and in the loop in lines 12-15, some cyclings are applied. Since the involved braids are rigid of canonical length at most~$l$, and cycling is just a cyclic permutation of the factors with a possible application of~$\tau$ to a simple element, this final part of the algorithm is negligible with respect to the previous one.

 Therefore, the generic-case complexity of \autoref{algo} is $O(l(l+n)n^3\log n)$.
\end{proof}

\begin{algorithm}[ht]
\small
\SetKwData{Self}{selfConjugateOrbit}
\SetKwInOut{Input}{Input}\SetKwInOut{Output}{Output}
\Input{A braid $x\in \mathbb B_n$ given in left normal form, and an integer $k>1$.}
\Output{A braid $a\in \mathbb B_n$ such that $a^k=x$, or the message ``{\it A $k$-th root does not exist.}".}

$y:=x$;\quad $l=\ell(x)$;\quad $\alpha=1\in \mathbb B_n$;\quad $r=0\in \mathbb Z$\;
\While{$\iota(y)\wedge \partial(\varphi(y))\neq 1$ {\bf and} $r< l\left(\frac{n(n-1)}{2}-1\right)$}{
 $\alpha:=\alpha\: \mathfrak p(y)$\;
 $y:=\mathfrak s(y)$\;
 $r:=r+1$\;
}
\If{$\iota(y)\wedge \partial(\varphi(y))\neq 1$}{
  $y$ is not rigid. Apply the algorithm in \cite{Lee}\;
}
\ElseIf{$\{\mbox{Minimal simple elements for }y\}\neq \{\iota(y),\: \partial(\varphi(y))\}$}{
  $USS(y)$ is not minimal. Apply the algorithm in \cite{Lee}\;
}
\Else{

$y':=\tau(y)$; \quad $z:=\cc(y)$;\quad $PC:=\iota(y)\in \mathbb B_n$; \quad $t:=1\in \mathbb Z$;   \quad $p:=\inf(y)$; \quad $l:=\ell(y)$; \quad $\Self:=0$\;
\While{$z\neq y$ {\bf and} $z\neq y'$}
{
 $PC:=PC\: \iota(z)$\;
 $z:=\cc(z)$\;
 $t:=t+1$\;
}
\If{$z=y'$}{
   $\Self := 1$\;
 }

\If{$\Self=0$}{
 $c:= p/2$\;
 $d:= l/t$\;
 \If{$k|c$ {\bf and} $k|d$}{
  $v:=\Delta^2$\;
  $w:=PC$\;
  \Return{$\alpha v^{\frac{c}{k}}w^{\frac{d}{k}}\alpha^{-1}$\;}
 }
 \Else{
  \Return{``A $k$-th root does not exist.'';}
 }
}
\ElseIf{$\Self=1$ {\bf and} $y=y'$}{
 $c:= p$\;
 $d:= l/t$\;
 \If{$k|c$ {\bf and} $k|d$}{
  $v:=\Delta$\;
  $w:=PC$\;
  \Return{$\alpha v^{\frac{c}{k}}w^{\frac{d}{k}}\alpha^{-1}$\;}
 }
 \Else{
  \Return{``A $k$-th root does not exist.'';}
 }
}
\ElseIf{$\Self=1$ {\bf and} $y\neq y'$}{
 $t:=2t$\;
 $c:= \frac{pt+2l}{2t}$\;
 $d:= \frac{2l}{t}$\;
 \If{$k|c$ {\bf and} $k|d$}{
  $v:=\Delta$\;
  $w:=PC\: \Delta^{-1}$\;
  \Return{$\alpha v^{\frac{c}{k}}w^{\frac{d}{k}}\alpha^{-1}$\;}
 }
 \Else{
  \Return{``A $k$-th root does not exist.'';}
 }
}
}

\caption{Find a $k$-th root of a braid $x$. }\label{algo}
\end{algorithm}

\begin{remark}
Although the integers~$p$ and~$k$ are part of the input, the computed complexity does not involve them, as treating with these integers is usually negligible, in reasonable examples, with respect to the calculated complexity. If~$p$ is really big, one should take into account the number~$\log p$. The case of~$k$ is somehow different, as one would have a positive answer only if~$k$ is a divisor of the integers~$c$ and~$d$ (with $d\neq 0$), which are~$O(p+l)$, so it makes no sense to ask for a $k$-th root of~$x$, in the generic case, if~$k$ is too big compared with~$p$ and~$l$.
\end{remark}



\bibliographystyle{plain}
\bibliography{nroots}

\begin{thebibliography}{10}

\bibitem{AnshelAnshelGoldfeld}
Iris Anshel, Michael Anshel, and Dorian Goldfeld.
\newblock An algebraic method for public-key cryptography.
\newblock {\em Math. Res. Lett.}, 6(3):287--291, 1999.

\bibitem{Artin}
Emil Artin.
\newblock {Theory of Braids}.
\newblock {\em Ann. of Math.(2)}, 48:101--126, 1947.

\bibitem{BirmanGebhardtGM1}
Joan~S. Birman, Volker Gebhardt, and Juan Gonz\'alez-Meneses.
\newblock Conjugacy in {G}arside groups. {I}. {C}yclings, powers and rigidity.
\newblock {\em Groups Geom. Dyn.}, 1(3):221--279, 2007.

\bibitem{BirmanGebhardtGM2}
Joan~S. Birman, Volker Gebhardt, and Juan Gonz\'alez-Meneses.
\newblock {Conjugacy in Garside groups {II}: structure of the ultra summit
  set}.
\newblock {\em Groups Geom. Dyn.}, 2(1):13--61, 2008.

\bibitem{BirmanKoLee2}
Joan~S. Birman, Ki~Hyoung Ko, and Sang~Jin Lee.
\newblock {The Infimum, Supremum, and Geodesic Length of a Braid Conjugacy
  Class}.
\newblock {\em Adv. Math.}, 164(1):41--56, 2001.

\bibitem{CarusoWiest}
Sandrine Caruso and Bert Wiest.
\newblock {On the genericity of pseudo-Anosov braids {II}: conjugations to
  rigid braids}.
\newblock {\em Groups Geom. Dyn.}, 11(2):549--565, 2017.

\bibitem{Charney}
Ruth Charney.
\newblock Artin groups of finite type are biautomatic.
\newblock {\em Math. Ann.}, 292(1):671--683, 1992.

\bibitem{Chow}
Wei-Liang Chow.
\newblock {On the algebraical braid group}.
\newblock {\em Ann. of Math.(2)}, 49:654--658, 1948.

\bibitem{DehornoySurvey}
Patrick Dehornoy.
\newblock Braid-based cryptography.
\newblock In {\em Group theory, statistics, and cryptography}, volume 360 of
  {\em Contemp. Math.}, pages 5--33. Amer. Math. Soc., Providence, RI, 2004.

\bibitem{DehornoyLibro}
Patrick Dehornoy.
\newblock {\em Foundations of {G}arside theory}, volume~22 of {\em EMS Tracts
  in Mathematics}.
\newblock European Mathematical Society (EMS), Z\"urich, 2015.
\newblock Avec Fran\c{c}ois Digne, Eddy Godelle, Daan Krammer et Jean Michel.

\bibitem{ElrifaiMorton}
Elsayed~A. Elrifai and Hugh~R. Morton.
\newblock {Algorithms for positive braids}.
\newblock {\em Q. J. Math.}, 45(4):479--497, 1994.

\bibitem{Epsteinetal}
David~A. Epstein, James~W. Cannon, Derek~F. Holt, S.~V. Levy, M.~S. Paterson,
  and W.~P. Thurston.
\newblock {\em {Word processing in groups}}.
\newblock A. K. Peters, Ltd., Natick, MA, USA, 1992.

\bibitem{Gebhardt}
Volker Gebhardt.
\newblock {A new approach to the conjugacy problem in Garside groups}.
\newblock {\em J. Algebra}, 292:282--302, 2005.

\bibitem{GebhardtGM-Sliding1}
Volker Gebhardt and Juan Gonz{\'{a}}lez-Meneses.
\newblock {Solving the conjugacy problem in Garside groups by cyclic sliding}.
\newblock {\em J. Symbolic Comput.}, 45(6):629--656, 2010.

\bibitem{GebhardtGM-Sliding2}
Volker Gebhardt and Juan Gonz{\'{a}}lez-Meneses.
\newblock {The cyclic sliding operation in Garside groups}.
\newblock {\em Math. Z.}, 265:85--114, 2010.

\bibitem{GM-roots}
Juan Gonz\'alez-Meneses.
\newblock The n-th root of a braid is unique up to conjugacy.
\newblock {\em Algebr. Geom. Topol.}, 3(2):1103--1118, 2003.

\bibitem{GMValladares}
Juan Gonz{\'{a}}lez-Meneses and Dolores Valladares.
\newblock On the centralizer of generic braids.
\newblock {\em J. Group Theory}, 21(6):973--1000, 2018.

\bibitem{Coreanos}
Ki~Hyoung Ko, Sang~Jin Lee, Jung~Hee Cheon, Jae~Woo Han, Ju-sung Kang, and
  Choonsik Park.
\newblock New public-key cryptosystem using braid groups.
\newblock In Mihir Bellare, editor, {\em Advances in Cryptology --- CRYPTO
  2000}, pages 166--183, Berlin, Heidelberg, 2000. Springer Berlin Heidelberg.

\bibitem{Lee}
Sang-Jin Lee.
\newblock Garside groups are strongly translation discrete.
\newblock {\em J. Algebra}, 309(2):594--609, 2007.

\bibitem{Sibert}
Hervé Sibert.
\newblock {Extraction of roots in Garside groups}.
\newblock {\em Comm. in Algebra}, 30(6):2915--2927, 2002.

\end{thebibliography}


%
%


{\tiny IMB, UMR 5584, CNRS, UNIV. BOURGOGNE FRANCHE-COMTÉ, 21000 DIJON, FRANCE; \\
DEPARTMENT OF MATHEMATICS, HERIOT-WATT UNIVERSITY,  UNITED KINGDOM.
}\\
{\scriptsize {\it E-mail address}: M.Cumplido@hw.ac.uk}

{\tiny
DEPARTAMENTO DE \'{A}LGEBRA, UNIVERSIDAD DE SEVILLA, SPAIN.}\\
{\scriptsize{\it E-mail address}: meneses@us.es}

{\tiny
DEPARTAMENTO DE CIENCIAS INTEGRADAS, UNIVERSIDAD DE HUELVA, SPAIN.} \\
{\scriptsize{\it E-mail address}: marithania@us.es}

\end{document}